\theoremstyle{plain}      
\newtheorem{theorem}{Theorem}[section]      
\newtheorem{lemma}{Lemma}[section]      
\newtheorem{corollary}[theorem]{Corollary}      
\newtheorem{proposition}{Proposition}[section]
\newtheorem{definition}{Definition}[section]          
\theoremstyle{remark}      
\newtheorem{remark}{Remark}[section]
\newcommand{\Q}{{\mathbb{Q}}}        
\newcommand{\Z}{{\mathbb{Z}}}   
\newcommand{\C}{{\mathbb{C}}}      
\newcommand{\R}{{\mathbb{R}}}      
\newcommand{\ro}{{\widetilde{\rho}}}
\begin{document}

\date{\today}

\title{Zariski density and finite quotients of mapping class groups}         
\author{ Louis Funar\\      
\small \em Institut Fourier BP 74, UMR 5582 \\      
\small \em University of Grenoble I \\      
\small \em 38402 Saint-Martin-d'H\`eres cedex, France  \\      
\small \em e-mail: {\rm funar@fourier.ujf-grenoble.fr}  
}

\maketitle 

\begin{abstract} 
Our main result is that the image
of  the quantum representation of a central extension of the mapping 
class group of the genus $g\geq 3$ closed orientable surface 
at a prime $p\geq 5$ is a Zariski dense 
discrete subgroup of some higher rank algebraic 
semi-simple Lie group $\mathbb G_p$ defined over $\Q$. 
As an application we find that, for any prime $p\geq 5$ 
a central extension of the genus $g$ 
mapping class group surjects onto  
the finite  groups $\mathbb G_p(\Z/q\Z)$, 
for all but finitely many primes $q$. 
This method provides infinitely many 
finite quotients of a given mapping class group  
outside the realm of symplectic groups. \\

\noindent 2000 MSC Classification: 57 M 07, 20 F 36, 20 G 20, 22 E 40.  \\
 
\noindent Keywords:  Mapping class group, Dehn twist,   
braid group, Burau representation,  
quantum representation,  discrete subgroup of 
semi-simple Lie groups, finite quotient.
\end{abstract}

\maketitle

\section{Introduction and statements}

The  aim of the this paper is to obtain a largeness result for the 
images of quantum representations of mapping class groups in genus at least 
$3$. The main motivation is the construction of large families 
of finite quotients of (central extensions of the) mapping class groups 
by using the strong approximation theorem. 
This method furnishes a large supply of  
finite quotients of  mapping class groups   
outside the realm of symplectic groups. Similar results were obtained 
independently by Masbaum and Reid in \cite{MRe}.  Earlier, Looijenga 
has proved in \cite{Loo} that the images of Prym representations  
(associated to finite abelian groups) of  suitable 
finite index subgroups of mapping class groups  
and of subgroups from the Johnson filtration are arithmetic groups.

Some largeness results in this direction are already known. 
In \cite{F} we showed that the images are infinite and 
non-abelian (for all but finitely many explicit cases) 
using earlier results of Jones who proved in 
\cite{Jones} that the same holds true for the braid group 
representations factorizing through the 
Temperley-Lieb algebra at roots of unity. Masbaum then found in \cite{Mas98} 
explicit elements of infinite order in the image.   
General arguments concerning Lie groups actually show 
that the image should contain a free 
non-abelian group. Furthermore, Larsen and Wang proved (see \cite{LW}) 
that the image  of the quantum representations of the mapping class groups at 
roots of unity of the form 
$\pm \exp\left(\frac{2(p+1)\pi i}{4p}\right)$, for prime $p\geq 5$,  
is dense in the projective unitary group.

In a previous paper \cite{FK11} the authors proved that the 
images are large in the sense that they contain explicit 
free non-abelian groups. Now, each image is contained into some unitary group 
of matrices with cyclotomic integers entries, by \cite{GM}. 
The latter group can be embedded as an irreducible higher rank lattice 
in a semi-simple Lie group $\mathbb G_p(\R)$ (depending on the genus and 
the order of the roots of unity)  obtained by restriction of scalars. 
The main result of this paper  strengthens the largeness property above 
by showing that, in general, the image of a  
quantum representation is Zariski dense in  the non-compact group 
$\mathbb G_p(\R)$.

Let us introduce now some terminology.  Recall that in \cite{BHMV} 
the authors defined the TQFT functor $\mathcal V_{p}$, for every $p\geq 3$  
and a primitive root of unity $A$ of order $2p$.
These TQFT should correspond to the so-called 
$SU(2)$-TQFT, for even $p$ and to 
the $SO(3)$-TQFT, for odd $p$ (see also \cite{LW} for another 
$SO(3)$-TQFT). 

\begin{definition}\label{qrep}
Let $p\in\Z_+$, $p\geq 3$ and $A$ be a primitive 
$2p$-th root of unity. 
The  quantum representation $\rho_{p,A}$ 
is the projective representation of  the mapping class group 
associated to the TQFT $\mathcal V_{p}$ at the root of unity  $A$. 
We denote therefore 
by $\ro_{p,A}$ the linear representation of the central extension 
$\widetilde{M_g}$ of the mapping class groups $M_g$ (of the genus $g$ closed orientable surface) which resolves the 
projective ambiguity of $\rho_{p,A}$ (see \cite{Ger,MR}).  
Furthermore, $N(p,g)$ will denote the dimension of the space of conformal 
blocks associated by the TQFT $\mathcal V_{p}$ to the closed 
orientable surface of genus $g$. 
\end{definition}

\begin{remark}
The unitary TQFTs arising usually correspond to the following 
choices of the root of unity: 
\[ A_p=\left\{\begin{array}{ll}
-\exp\left(\frac{2\pi i}{2p}\right), & {\rm if}\: p\equiv 0({\rm mod}\: 2);\\
-\exp\left(\frac{(p+1)\pi i}{p}\right) , & {\rm if}\: p\equiv 1({\rm mod}\: 2).\\
\end{array}\right. \]
\end{remark}

For  prime $p\geq 5$  we denote 
by ${\mathcal O}_p$ the  ring of cyclotomic integers 
${\mathcal O}_p=\Z[\zeta_p]$, if   
$p\equiv -1({\rm mod}\: 4)$ and ${\mathcal O}_p=\Z[\zeta_{4p}]$, if  
$p\equiv 1({\rm mod}\:4)$ respectively, where $\zeta_p$ is a primitive
$p$-th root of unity.
The main result of \cite{GM} states that, for every 
prime  $p\geq 5$, there exists a free ${\mathcal O}_p$\,-lattice 
$S_{g,p}$ in the $\C$-vector space of conformal 
blocks associated by  the TQFT ${\mathcal V}_p$ to the genus $g$
closed orientable surface and a non-degenerate Hermitian  
${\mathcal O}_p$-valued form on 
$S_{g,p}$ such that  (a central extension of) the mapping class group 
preserves $S_{g,p}$  and keeps invariant the Hermitian form. 
Therefore the image of the mapping class group consists of 
unitary matrices (with respect to the Hermitian form) with 
entries in ${\mathcal O}_p$. Let  $P\mathbb U({\mathcal O}_p)$ be the group 
of all such matrices, up to scalar multiplication. 
For the sake of simplicity of the exposition we will consider only primes 
$p$ satisfying $p\equiv -1({\rm mod}\; 4)$, from now on. Similar results 
hold for the remaining primes with only minor modifications.

It is known that $P\mathbb U({\mathcal O}_p)$ is an irreducible 
lattice in a semi-simple Lie group $\mathbb P\mathbb G_p(\R)$ obtained 
by the so-called restriction of scalars construction from the 
totally real cyclotomic field $\Q(\zeta_p+\zeta_p^{-1})$ to $\Q$. 
Specifically, let us denote by $\mathbb G_p(\R)$ the product 
$\prod_{\sigma\in S(p)}S\mathbb U^{\sigma}$. Here $S(p)$ stands for 
a set of representatives for the classes of complex 
valuations $\sigma$ of $\mathcal O_p$ 
modulo complex conjugacy. 
The factor $S\mathbb U^{\sigma}$ is the 
special unitary group associated to the 
Hermitian form conjugated by $\sigma$, thus corresponding to some 
Galois conjugate root of unity.  Denote also 
by $\ro_p$ and $\rho_p$ the representations  
$\prod_{\sigma\in S(p)} \ro_{p,\sigma(A_p)}$ and 
$\prod_{\sigma\in S(p)} \rho_{p,\sigma(A_p)}$, respectively. 
In \cite{DW}  the authors proved that the restriction of 
$\ro_p$ to the universal central extension  $\widetilde{M_g}^{\rm univ}$ 
of $M_g$  -- which is a subgroup of $\widetilde{M_g}$ of index 12 -- takes values 
in $S\mathbb U$. This implies that $\ro_p(\widetilde{M_g})\subset S\mathbb U$ 
for $g\geq 3$ and  prime $p\geq 5$.   

\vspace{0.2cm}\noindent
Notice that the $\mathbb G_p(\R)$ is the set of real points  
of a semi-simple algebraic group $\mathbb G_p$ 
defined over $\Q$.

\vspace{0.2cm}\noindent
Our main result can be stated now as follows: 

\begin{theorem}\label{ZD}
Suppose that $g \geq 3$ and  $p\geq 5$ is prime.  
Then $\ro_p(\widetilde{M_g})$ is a discrete Zariski dense subgroup 
of $\mathbb G_p(\R)$ whose projections onto the simple factors of 
$\mathbb G_p(\R)$ are topologically dense. 
\end{theorem}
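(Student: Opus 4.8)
The plan is to compute the Zariski closure $H$ of $\ro_p(\widetilde{M_g})$ inside the $\Q$-group $\mathbb G_p$, prove that $H=\mathbb G_p$, and then read off discreteness and the density of the projections. Discreteness is the softest of the three assertions: by \cite{GM} the image $\ro_{p,A_p}(\widetilde{M_g})$ preserves a non-degenerate Hermitian $\mathcal O_p$-form on the free $\mathcal O_p$-lattice $S_{g,p}$, so it lies in the group of $\mathcal O_p$-points of $\mathbb G_p$. This is an arithmetic subgroup of the semisimple $\Q$-group $\mathbb G_p$, hence a lattice — in particular a discrete subgroup — of $\mathbb G_p(\R)$, and $\ro_p(\widetilde{M_g})$, being contained in it, is discrete. (Passing between $M_g$, its central extension $\widetilde{M_g}$, and the projective picture only introduces finite kernels and is harmless.)

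For Zariski density, observe first that $\ro_{p,A_p}(\widetilde{M_g})\subseteq\mathbb G_p(\Q)$, identifying $\mathbb G_p(\Q)$ with the $F$-points ($F=\Q(\zeta_p+\zeta_p^{-1})$) of the special-unitary $F$-group $\mathbb G_0$ of which $\mathbb G_p$ is the restriction of scalars; hence $H$ is defined over $\Q$. Base-changing, $\mathbb G_p\times\overline\Q\cong\prod_{\iota}\mathrm{SL}_N$ over the embeddings $\iota\colon F\hookrightarrow\overline\Q$ (equivalently over $S(p)$), where $N=N(p,g)$, and — this is the point of restriction of scalars — $\mathrm{Gal}(\overline\Q/\Q)$ permutes these $[F:\Q]$ factors transitively. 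By Larsen--Wang \cite{LW} the projection of $\ro_p(\widetilde{M_g})$ to the unitary factor $S\mathbb U^{\sigma_0}(\R)=\mathrm{SU}(N)$ is dense, hence Zariski dense, so the $\sigma_0$-component of $H\times\overline\Q$ is the full $\mathrm{SL}_N$; applying the transitive Galois action, the projection of $H\times\overline\Q$ to every factor is $\mathrm{SL}_N$. Thus $H\times\overline\Q$ is a subdirect product of $\prod_\iota\mathrm{SL}_N$ with all projections surjective, and by Goursat's lemma for products of almost simple groups it is a product of twisted diagonals indexed by the blocks of a canonical, hence $\mathrm{Gal}$-invariant, partition of the index set; transitivity forces all blocks to have a common size $r$, and $H=\mathbb G_p$ is equivalent to $r=1$.

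The crux — and the step I expect to be the main obstacle — is excluding $r\ge 2$. If two factors, corresponding to $\sigma\ne\sigma'$ in $S(p)$, shared a block, then $\ro_{p,\sigma'(A_p)}$ would be isomorphic to $\ro_{p,\sigma(A_p)}$ or to its contragredient. I would kill this with characters: $\ro_{p,\sigma'(A_p)}$ has character $\nu\circ\chi_\sigma$ for the field automorphism $\nu$ carrying $\sigma$ to $\sigma'$, so the two alternatives force $\nu$, respectively $c\,\nu$ (with $c$ complex conjugation), to fix the character field $E$ of $\ro_{p,A_p}$ pointwise. Now $\mathrm{Gal}(\Q(\zeta_p)/\Q)$ is abelian with $\langle c\rangle$ the subgroup fixing $F$, and $S(p)=\mathrm{Gal}(F/\Q)$; so if one shows $E\supseteq F$ — the genuinely representation-theoretic input, which I would extract from an explicit trace computation (the eigenvalues of a Dehn twist, or of a short product of them, acting on the conformal blocks of $\mathcal V_p$) exhibiting a character value that generates at least $F$ — then no lift to $\mathrm{Gal}(\Q(\zeta_p)/\Q)$ of the nontrivial element $\sigma'\sigma^{-1}\in S(p)$ can fix $E$, and neither can that lift composed with $c$. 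Hence $r=1$, $H=\mathbb G_p$, and $\ro_p(\widetilde{M_g})$ is Zariski dense. Pinning down this trace field, together with keeping track of centers in the Goursat argument, is where the real work lies.

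Finally, for the projections to the $\R$-simple factors $S\mathbb U^{\sigma}(\R)$ of $\mathbb G_p(\R)$: the closure $L_\sigma$ of $\mathrm{pr}_\sigma(\ro_p(\widetilde{M_g}))$ is a closed subgroup, Zariski dense in the almost simple $S\mathbb U^{\sigma}$ by the density just proved, so $\mathrm{Lie}(L_\sigma)$ is an $\mathrm{Ad}$-invariant ideal of the simple Lie algebra $\mathfrak{su}(n_\sigma,m_\sigma)$, whence $L_\sigma$ is either discrete or all of $S\mathbb U^{\sigma}(\R)$. For a compact factor (in particular for $\sigma_0$, where it also follows directly from \cite{LW}) the discrete alternative is impossible, a Zariski-dense subgroup of a connected compact Lie group being dense. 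For a non-compact factor I would rule out discreteness using the explicit infinite-order elements of \cite{Mas98,FK11}: Galois-conjugate linear representations have the same kernel, so each $\mathrm{pr}_\sigma$ is injective on $\ro_p(\widetilde{M_g})$, and for a suitable such element $\varphi$ the matrix $\ro_{p,\sigma(A_p)}(\varphi)$ is semisimple of infinite order; once its eigenvalues are arranged to lie on the unit circle, the cyclic group it generates has positive-dimensional closure, so $L_\sigma$ cannot be discrete. Therefore $L_\sigma=S\mathbb U^{\sigma}(\R)$ for every $\sigma$, which completes the proof.
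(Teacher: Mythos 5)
Your overall strategy coincides with the paper's: Larsen--Wang density in a unitary factor, a Goursat-type argument for the product (the paper uses Kuperberg's ``Hall lemma'' over $\R$ on adjoint simple groups; you base-change to $\overline{\Q}$ and invoke Galois transitivity, which amounts to the same thing but carries the extra bookkeeping with centers of $\mathrm{SL}_N$ that the adjoint formulation avoids), pairwise inequivalence of the Galois conjugate representations as the crucial input, and discreteness from the $\mathcal O_p$-integrality of \cite{GM}. However, both points you flag as ``the real work'' are left as plans, and the second would not go through as sketched. For the inequivalence, showing that the character field contains $F=\Q(\zeta_p+\zeta_p^{-1})$ is plausible but unproved --- the trace of a single Dehn twist is a sum of roots of unity whose field of definition is not obviously all of $F$. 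The paper's Proposition~\ref{inequivalent} avoids trace fields: it compares the multisets of Dehn-twist eigenvalues $A^{i(i+2)}$ and uses dimensions of conformal-block spaces of colored subsurfaces (e.g.\ the $4$-holed sphere) to rigidify the color bijection $f$, deducing $A^8=B^8$ (and $A^3=B^3$ for even $p$), hence $A=B$; since $\mathbb G_p(\R)$ carries only one member of each pair $\{A,\overline{A}\}$, this is exactly what the Hall lemma needs.

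The more serious gap is topological density of the projections to the non-compact factors $SU(m,n)$. You want a semisimple infinite-order element of $\ro_{p,\sigma(A_p)}(\widetilde{M_g})$ whose eigenvalues lie on the unit circle, but nothing forces this for the Masbaum or Funar--Kohno elements once $\sigma$ carries $A_p$ to a non-unitary conjugate: in $SU(m,n)$ with $m,n\geq 1$ an infinite-order semisimple element is typically hyperbolic, and then the cyclic group it generates is already discrete, so the argument yields nothing. The paper supplies a genuinely extra ingredient here (Lemma~\ref{contain} and Proposition~\ref{weakZD}): the restriction of $\rho_{p,A}$ to $PB_4\subset M_{0,5}\hookrightarrow M_g$ contains the reduced Burau representation $\beta_{q_p(A)}$ as a sub-representation, and the Burau image is already topologically dense in $SU(3)$ or $SU(2,1)$ by Freedman--Larsen--Wang / Kuperberg \cite{FLW,Ku}. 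That forces indiscreteness of the image, and then Zariski density of a non-discrete subgroup of an almost simple Lie group gives topological density. This embedded Burau block is the idea missing from your sketch.
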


\begin{remark}
A similar result holds for the $SU(2)$-TQFT. Specifically let  
$p=2r$ where $r\geq 5$ is prime. 
According to (\cite{BHMV}, 1.5) there is an isomorphism 
of TQFTs between $\mathcal V_{2r}$ and $\mathcal V_2'\otimes \mathcal V_r$, 
and hence the projection on the second factor gives us 
a homomorphism $\pi:\ro_{2r}(\widetilde{M_g})\to \mathbb G_r(\R)$. 
Furthermore, the image of the TQFT representation associated to $\mathcal V_2'$ 
is finite. Therefore, $\pi\circ \ro_{2r}(\widetilde{M_g})$ is a 
discrete Zariski dense subgroup of $\mathbb G_r(\R)$. 
Notice that the result holds also for $g=2$ and prime $p\geq 7$ 
using the modifications from \cite{FK11} in the constructions of 
free non-abelian subgroups in the image. We skip the details. 
\end{remark}

We now consider the Johnson filtration by the 
subgroups $I_g(k)$ of the mapping class group $M_g$ of the closed orientable 
surface of genus $g$, consisting of those 
elements having a trivial outer action on the 
$k$-th nilpotent quotient of the fundamental 
group of the surface, for  some $k\in\Z_+$.

The main application of our density result is the following: 
\begin{theorem}\label{fquot}
For every $g\geq 3$ and prime $p\geq 5$,    
there exists some homomorphism 
$\widetilde{M_g}\to \mathbb G_p(\Z/q^k\Z)$, whose restriction to 
$I_g(3)$ is surjective for all 
large enough primes $q$ and all $k\geq 1$. In particular, the surjectivity holds 
also for $\widetilde{M_g}$, the Torelli group $I_g(1)$,  
the Johnson kernel $I_g(2)$, or any finite index subgroup 
of $\widetilde{M_g}$, respectively. 
\end{theorem}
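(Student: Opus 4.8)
The plan is to deduce this from the Zariski density of Theorem \ref{ZD} via the strong approximation theorem of Matthews--Vaserstein--Weisfeiler (or Nori, Pink). First I would fix $g \geq 3$ and prime $p \geq 5$, and pass to the arithmetic group $\Gamma = \ro_p(\widetilde{M_g}) \subset \mathbb G_p(\mathcal O_p)$, which by Theorem \ref{ZD} is a finitely generated, Zariski dense subgroup of the connected semisimple $\Q$-group $\mathbb G_p$. One subtlety to record at the outset is that $\mathbb G_p$ is a product of special unitary groups $S\mathbb U^\sigma$ obtained by restriction of scalars from the totally real field $\Q(\zeta_p+\zeta_p^{-1})$; it is semisimple and simply connected (the group $SU$ of a nondegenerate Hermitian form is simply connected), which is exactly the hypothesis needed for strong approximation. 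Applying strong approximation to the Zariski dense subgroup $\Gamma$ then yields that, for all but finitely many primes $q$, the closure of $\Gamma$ in the congruence completion at $q$ is everything, i.e. the reduction map $\Gamma \to \mathbb G_p(\mathcal O_p/\mathfrak q) = \mathbb G_p(\Z/q^k\Z)$ (after choosing a prime $\mathfrak q$ of $\mathcal O_p$ over $q$ and the corresponding finite residue quotient; for $q$ split one gets the asserted $\mathbb G_p(\Z/q^k\Z)$ factors) is surjective. Composing with $\ro_p \colon \widetilde{M_g} \to \Gamma$ gives the desired homomorphism $\widetilde{M_g} \to \mathbb G_p(\Z/q^k\Z)$.

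The heart of the refinement is the clause asserting surjectivity already on $I_g(3)$, the third term of the Johnson filtration. For this I would first argue that $\ro_p(I_g(3))$ is itself Zariski dense in $\mathbb G_p$. The key input here is that $I_g(3)$ is normal in $M_g$ (each $I_g(k)$ is characteristic), hence its preimage $\widetilde{I_g(3)}$ is normal in $\widetilde{M_g}$, so $\ro_p(\widetilde{I_g(3)})$ is a normal subgroup of the Zariski dense group $\Gamma$. The Zariski closure $H$ of a normal subgroup of $\Gamma$ is normalized by $\Gamma$, hence normal in the Zariski closure $\mathbb G_p$ of $\Gamma$; since $\mathbb G_p$ is semisimple, $H$ is a product of a subset of its simple factors (times a finite central piece). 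To rule out that $H$ is a proper such product, I would use that $I_g(3)$ is a large subgroup — e.g. it contains the commutator-type elements or bounding-pair-type maps used in \cite{FK11} to build free non-abelian subgroups, and more to the point the constructions of explicit elements whose images generate something Zariski dense must already live (after taking suitable commutators, which land in deep terms of the Johnson filtration) inside $I_g(3)$. Concretely, one shows that the specific Dehn twist products exhibited earlier, or high enough commutators thereof, lie in $I_g(3)$ and still generate a Zariski dense subgroup; since each simple factor $S\mathbb U^\sigma$ is noncompact and the relevant elements have infinite order with the right spectral behavior in every Galois conjugate, no simple factor can be missed. Once $\ro_p(\widetilde{I_g(3)})$ is known Zariski dense in the simply connected $\mathbb G_p$, strong approximation applies verbatim to it, giving surjectivity of $\widetilde{I_g(3)} \to \mathbb G_p(\Z/q^k\Z)$ for all large $q$ and all $k \geq 1$. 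The final "in particular" is then immediate: $I_g(1) \supset I_g(2) \supset I_g(3)$ and $\widetilde{M_g} \supset \widetilde{I_g(3)}$, and any finite-index subgroup $\Lambda \leq \widetilde{M_g}$ contains a finite-index subgroup of $\widetilde{I_g(3)}$ which is again normal of finite index, so its image is a finite-index, Zariski dense (being normalized by a Zariski dense group acting with finitely many cosets) subgroup, to which strong approximation again applies.

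I expect the main obstacle to be precisely the Zariski density of $\ro_p(I_g(3))$ — i.e. verifying that the "largeness" witnesses constructed in \cite{F, Mas98, FK11} can be arranged to sit inside the third term of the Johnson filtration while retaining enough of their spectral/genericity properties to force density onto every simple factor. The cleanest route is the normality argument above, which reduces the problem to showing $\ro_p(\widetilde{I_g(3)})$ is infinite in each simple factor (equivalently, is not contained in a proper normal subgroup); this in turn follows if one can name even a single element of $I_g(3)$ whose image in each $S\mathbb U^\sigma$ has infinite order and is not central, which one gets by taking a sufficiently deep commutator of a Dehn twist with the genericity-producing elements of \cite{FK11} and checking — via the explicit eigenvalue computations there, or via the fact that a nontrivial product of transvections along curves that algebraically intersect has infinite order — that it does not die under any Galois conjugation. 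The remaining points (simple connectedness of $\mathbb G_p$ as needed for strong approximation; correctly matching the finite residue quotients $\mathbb G_p(\mathcal O_p/\mathfrak q^k)$ with $\mathbb G_p(\Z/q^k\Z)$ for split $q$; handling the finitely many bad primes) are standard and I would only sketch them.
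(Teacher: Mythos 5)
Your overall architecture matches the paper's: pass to $\Gamma=\ro_p(\widetilde{M_g})$, invoke Zariski density from Theorem \ref{ZD}, and apply strong approximation (the paper uses Nori's version, with Serre--Vasiu to pass from $k=1$ to all $k\geq 1$) to the $\Q$-group $\mathbb G_p$, whose simple connectedness you correctly flag as the key hypothesis. But the crux of the theorem — that the restriction to $I_g(3)$ (not just $\widetilde{M_g}$) is already surjective, equivalently that $\ro_p(I_g(3))$ is Zariski dense — is precisely where your argument has a genuine gap, and you say as much yourself. Your route is: $I_g(3)$ is characteristic, so $\ro_p(I_g(3))$ is normal in $\Gamma$, so its Zariski closure is a product of a subset of the simple factors of $\mathbb G_p$; then you need to rule out a proper sub-product by producing explicit elements of $I_g(3)$ whose images survive into every Galois conjugate factor. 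You propose taking deep commutators of the free-group generators from \cite{FK11} and checking their eigenvalue behavior under every embedding $\sigma\in S(p)$. None of this is actually carried out, and it is not routine: you would need a uniform infinite-order / non-centrality statement across all Galois conjugates simultaneously, which is exactly the kind of claim that requires real work.

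The paper avoids this entirely. It proves (Proposition \ref{chain3}) the much stronger and cleaner statement that
\[
\rho_p([K_g,K_g])\subset\rho_p(I_g(3))\subset\rho_p(K_g)\subset\rho_p(T_g)\subset\rho_p(M_g)
\]
is a chain of \emph{finite-index} normal subgroups. The tool is not normality plus explicit witnesses, but the structure of the successive Johnson quotients together with the fact that $p$-th powers of Dehn twists die under $\rho_p$. Concretely: the quotient $\rho_p(M_g)/\rho_p(T_g)$ factors through $Sp(2g,\Z/p\Z)$ by the congruence subgroup property of $Sp(2g,\Z)$; the quotient $\rho_p(T_g)/\rho_p(K_g)$ factors through $A/pA$ where $A\cong\bigwedge^3 H/H$ is Johnson's finitely generated abelian quotient; and similarly for $K_g/I_g(3)$ via the third Johnson homomorphism. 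Once $\rho_p(I_g(3))$ is finite index in $\rho_p(M_g)$, Zariski density is immediate (a finite-index subgroup of a Zariski dense subgroup of a connected group is Zariski dense), and strong approximation applies. You should also record, as the paper does, that the central extension $\widetilde{M_g}\to M_g$ is trivial over the Torelli group (because the generator of $H^2(M_g)$ pulls back from $Sp(2g,\Z)$), so $I_g(3)$ genuinely embeds in $\widetilde{M_g}$ and the statement about "restriction to $I_g(3)$" makes sense. In short: your reduction to strong approximation is correct, but the core content — why $\ro_p(I_g(3))$ is Zariski dense — is left as a sketch of a harder argument than necessary, whereas the paper has a self-contained finite-index proof via Johnson homomorphisms that you should adopt.
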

\begin{proof}
The pull-back of the central extension $\widetilde{M_g}$ on the Torelli subgroup 
$I_g(1)$ is trivial, because the generator of $H^2(M_g)$ is the pull-back 
of the generator of $H^2(Sp(2g,\Z))$, when $g\geq 3$. Thus 
$I_g(3)$ embeds into $\widetilde{M_g}$.  The image of the central factor 
is finite and $\rho_p(I_g(3))$ is of finite index in $\rho_p(M_g)$, 
according to proposition \ref{chain3}. Therefore,  $\ro_p(I_g(3))$ is of finite index 
in  $\ro_p(\widetilde{M_g})$ and hence Zariski dense in $\mathbb G_p(\R)$. 
Furthermore,  we will use 
the  following version of the 
strong approximation theorem due to Nori  
(see (\cite{No}, Thm.5.4) and also \cite{Weis}):
Let $G$ be a connected  linear algebraic group $G$ defined over $\Q$  
and $\Lambda\subset G(\Z)$ be a Zariski dense subgroup. 
Assume that $G(\C)$ is simply connected. Then the 
completion of $\Lambda$ with respect to the congruence  
topology induced from $G(\Z)$ is an open subgroup in 
the group $G(\widehat{\Z})$ of points of $G$ 
over the pro-finite completion $\widehat{\Z}$ of $\Z$.   
We now consider the group $G=\mathbb G_p$ which satisfies 
the assumptions of Nori's theorem.  
If we take $\Lambda$ to be a finite index subgroup of   
$\ro_p(\widetilde{M_g})$ the strong approximation theorem implies 
our claim for $k=1$. Then a classical result due to 
Serre  (see \cite{Serre}) for  $GL(2)$ 
and extended by Vasiu (see \cite{Vas}) to all reductive 
linear algebraic groups defined over $\Q$ improves the 
surjectivity statement to all $k\geq 1$.  
\end{proof}

\begin{remark}
The  arithmetic group $\mathbb G_p(\Z)$, for $g\geq 3$ and  prime 
$p\geq 5$ has the congruence property.  
This follows from results of Tomanov (see \cite{Tom}, Main Thm. (a)) 
and Prasad and Rapinchuk (see \cite{PR}, Thm. 2.(1) and Thm. 3)
on the congruence kernel for $\Q$-anisotropic algebraic groups of 
type $^2A_{n-1}$, with $n\geq 4$. In this respect it would be interesting 
to construct finite quotients of 
the residually finite (see \cite{F2})  group $\widetilde{M_g}$,   
other than the quotients of groups of the form $\mathbb G_p(\Z/q\Z)$. 
Moreover, $\mathbb G_p(\Z)$ 
is cocompact in $\mathbb G_p(\R)$, since 
it is $\Q$-anisotropic, by a classical result of Borel and Harish-Chandra 
(see \cite{BH}).    
\end{remark}

\begin{corollary}\label{fquot2}
For any prime $p\geq 5$ and $g\geq 3$ 
there exists a homomorphism  $M_g\to \mathbb P\mathbb G_p(\Z/q^k\Z)$,  
whose restriction to a given finite index subgroup of $M_g$ 
is surjective for all large enough primes $q\geq 5$ and all $k\geq 1$.
Here $\mathbb P\mathbb G_p(\R)$ is the product of the projective 
unitary groups whose associated  special unitary groups 
occur as factors of $\mathbb G_p(\R)$.  
\end{corollary}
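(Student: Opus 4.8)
The plan is to derive this from Theorem~\ref{fquot} by projectivizing and descending from the central extension $\widetilde{M_g}$ to $M_g$ itself. First I would let $Z=\ker(\pi)$, where $\pi\colon\widetilde{M_g}\to M_g$ is the canonical surjection. The subgroup $Z$ is central, and by Definition~\ref{qrep} the linear representation $\ro_p$ was chosen precisely so as to resolve the projective ambiguity of $\rho_p$; hence $\ro_p(Z)$ consists of scalar matrices, and it is in addition finite (this is the ``image of the central factor is finite'' remark used in the proof of Theorem~\ref{fquot}). Since $\ro_p(\widetilde{M_g})\subset S\mathbb{U}({\mathcal O}_p)\subset\mathbb{G}_p(\Z)$ by \cite{GM} and the restriction of scalars construction, the group $\ro_p(Z)$ lies in the (root of unity) scalar matrices of $\mathbb{G}_p(\Z)$, and its reduction modulo $q^k$ is still scalar, hence trivial in the quotient by the scalars. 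Thus for every prime $q$ and every $k\geq1$ the composite
\[
\widetilde{M_g}\xrightarrow{\ \ro_p\ }\mathbb{G}_p(\Z)\longrightarrow\mathbb{G}_p(\Z/q^k\Z)\longrightarrow\mathbb{P}\mathbb{G}_p(\Z/q^k\Z),
\]
the last arrow being the projection onto the quotient by the scalar matrices, kills $Z$ and factors through $\pi$, giving a homomorphism $\overline{\rho}_{p}^{\,(q^k)}\colon M_g\to\mathbb{P}\mathbb{G}_p(\Z/q^k\Z)$ independent of any subgroup --- namely the reduction mod $q^k$ of the projective representation $\rho_p$.

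Next, given a finite index subgroup $H\leq M_g$, I would pass to its preimage $\widetilde H=\pi^{-1}(H)$, which has finite index in $\widetilde{M_g}$ and contains $Z$ with $\widetilde H/Z=H$. Then $\ro_p(\widetilde H)$ has finite index in $\ro_p(\widetilde{M_g})$, hence is Zariski dense in $\mathbb{G}_p(\R)$ by Theorem~\ref{ZD}; being also a subgroup of $\mathbb{G}_p(\Z)$, and $\mathbb{G}_p$ being connected, defined over $\Q$, with $\mathbb{G}_p(\C)$ simply connected, Nori's strong approximation theorem (as quoted in the proof of Theorem~\ref{fquot}) produces a finite set of primes $S=S(H)$ such that $\ro_p(\widetilde H)\to\mathbb{G}_p(\Z/q\Z)$ is onto for all $q\notin S$; the Serre--Vasiu refinement then upgrades this to $\ro_p(\widetilde H)\to\mathbb{G}_p(\Z/q^k\Z)$ being onto for all $k\geq1$. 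Enlarging $S$ to contain $2$ and $3$, we may moreover take $q\geq5$.

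Finally, composing with $\mathbb{G}_p(\Z/q^k\Z)\to\mathbb{P}\mathbb{G}_p(\Z/q^k\Z)$ shows that $\widetilde H$ surjects onto $\mathbb{P}\mathbb{G}_p(\Z/q^k\Z)$; as this composite annihilates $Z$, it descends to a surjection $H=\widetilde H/Z\twoheadrightarrow\mathbb{P}\mathbb{G}_p(\Z/q^k\Z)$, which is exactly $\overline{\rho}_{p}^{\,(q^k)}|_{H}$. This proves the corollary.

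I do not expect a serious obstacle: the corollary is essentially a formal consequence of Theorems~\ref{ZD} and~\ref{fquot}. The points demanding care are purely of bookkeeping nature --- checking that the central kernel $Z$ of $\widetilde{M_g}\to M_g$ is sent into the scalars (so that the representation descends simultaneously to $M_g$ and to the projective group), that a finite index subgroup of $M_g$ pulls back to a finite index subgroup of $\widetilde{M_g}$ with still Zariski dense image, and keeping track of the finitely many excluded primes. One should also keep in mind the harmless subtlety that $\mathbb{G}_p(\Z/q^k\Z)\to\mathbb{P}\mathbb{G}_p(\Z/q^k\Z)$ need not be surjective if $\mathbb{P}\mathbb{G}_p(\Z/q^k\Z)$ were read as the group of $\Z/q^k\Z$-points of the algebraic group $\mathbb{P}\mathbb{G}_p$; here it is read as the quotient of $\mathbb{G}_p(\Z/q^k\Z)$ by its subgroup of scalar matrices, which is the target appearing in the statement.
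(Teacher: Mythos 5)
Your proof is correct and reaches the same conclusion, but the key descent step is justified differently from the paper. The paper's (terse) proof runs: the image of the center of $\widetilde{M_g}$ lies in the centralizer of $\ro_p(\widetilde{M_g})$, and \emph{Zariski density} (Theorem \ref{ZD}) forces that centralizer to coincide with the center of $\mathbb G_p(\R)$, i.e.\ the product of the (finite, scalar) centers of the $SU(m,n)$ factors --- so the map factors through $\mathbb P\mathbb G_p$. You instead appeal directly to Definition \ref{qrep}: since $\ro_p$ is by construction a linear lift of the projective representation $\rho_p$, the kernel $Z$ of $\widetilde{M_g}\to M_g$ must map into the scalars of each factor. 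Both arguments are valid; the paper's is more intrinsic (it never uses the specific way $\ro_p$ was built, only the Zariski density it already proved), while yours is more elementary and arguably clearer for a reader. The rest of your argument --- pulling a finite index $H\leq M_g$ back to $\widetilde H\leq\widetilde{M_g}$, noting Zariski density of $\ro_p(\widetilde H)$, and invoking Nori plus Serre--Vasiu, then composing with the quotient by scalars --- is exactly the machinery already set up in the proof of Theorem \ref{fquot}, and your cautionary remark about reading $\mathbb P\mathbb G_p(\Z/q^k\Z)$ as the quotient of $\mathbb G_p(\Z/q^k\Z)$ by its scalar subgroup (rather than as $\Z/q^k\Z$-points of an algebraic group $\mathbb P\mathbb G_p$) is a genuine and worthwhile clarification that the paper leaves implicit.
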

\begin{proof}
The image of the center of $\widetilde{M_g}$ by the homomorphism 
$\ro_p$ is contained in the 
centralizer of $\ro_p(\widetilde{M_g})$. The Zariski density 
result above implies that the centralizer is contained in the 
product of the centers of each simple factor of $\mathbb G_p(\R)$.  
This proves the claim. 
\end{proof}

\begin{remark}
\begin{enumerate}
\item  The first construction of finite quotients of mapping 
class group by this method was given by Masbaum in \cite{Mas08}.
\item The set of finite quotients of a particular $M_g$ 
(with $g\geq 3$) provided by Theorem \ref{fquot} is rather large.
Indeed,  $\mathbb P\mathbb G_p(\Z/q\Z)$  
are  finite groups of Lie type of arbitrarily large rank.  
In particular, the alternate group on $m$ elements  
is contained into some  $\mathbb P\mathbb G_p(\Z/q\Z)$, for large enough 
$p$ and $q$, and hence into some finite quotient of $M_g$. Therefore, 
every finite group embeds in some finite quotient of the 
genus $g\geq 3$ mapping class group. 
This answers a question of U. Hamenstaedt, 
first settled by Masbaum and Reid in \cite{MRe}.    
\item In \cite{FP} we already obtained results showing  that 
a given mapping class group 
has many more finite quotients than the family of all 
symplectic groups, as it can be measured by  the torsion of 
their  (essential) $2$-homology groups. 
\item The number $n(p)$ of the non-compact factors in $\mathbb G_p(\R)$ 
goes to infinity with $p$.  
\end{enumerate}
\end{remark}

\vspace{0.2cm}\noindent
Let $QH(G)$ denote the vector space of   
quasi-homomorphisms of the group $G$, namely of maps $\varphi:G\to \R$ 
for which $\sup_{a,b\in G}|\partial \varphi(a,b)|< \infty$, where 
$\partial \varphi(a,b)=\varphi(ab)-\varphi(a)-\varphi(b)$ is the 
boundary 2-cocyle. The quasi-homomorphism $\varphi$ is homogeneous 
if $\varphi(a^n)=n\varphi(a)$, for every $a\in G$ and $n\in\Z$. 
We consider the quotient $\widetilde{QH}(G)$ 
of $QH(G)$ by  the submodule generated by  the bounded 
quasi-homomorphisms of $G$ and the group homomorphisms.  
It is well-known (see \cite{BI}) 
that $\widetilde{QH}(G)$ is isomorphic to 
the kernel of the comparison homomorphism 
$H^2_b(G,\R)\to  H^2(G,\R)$, where $H^2_b(G,\R)$ denotes the 
second bounded cohomology group of $G$.  

\vspace{0.2cm}\noindent
If $\mathcal X$ is an irreducible Hermitian symmetric space of non-compact 
type and $I(\mathcal X)$ its isometry group, then denote by  
$\mathcal K_{I(\mathcal X)}$ the generator of the continuous bounded 
second cohomology  group $H^2_{cb}(I(\mathcal X),\R)$  of $I(\mathcal X)$. 
This class is defined, for instance, by the Dupont cocycle
$c_{I(\mathcal X)}:I(\mathcal X)\times I(\mathcal X)\to \Z$, given by:   
\[ c_{I(\mathcal X)}(g_1,g_2)=\frac{1}{4\pi}\int_{\Delta(g_1(x_0),g_2(x_0),g_1g_2(x_0))}\omega,  
\; {\rm }\; \;\;\; g_1,g_2\in I(\mathcal X),\]
where $\omega$ is the K\"ahler form on $\mathcal X$, $x_0\in \mathcal X$ 
and $\Delta(x,y,z)$ denotes an oriented smooth triangle on $\mathcal X$ 
with geodesic sides. 
Although the interior of the triangle with geodesic sides is not uniquely 
defined the value of the cocycle is well-defined because $\omega$ is closed 
(see \cite{BI} for details).
It is also known that the class of $c_{I(\mathcal X)}$ in 
the continuous second cohomology group $H^2_c(I(\mathcal X),\R)\cong\R$ 
is a generator. 
Bestvina and Fujiwara proved in \cite{BF} that $\widetilde{QH}(M_g)$ 
is infinitely generated. Quantum representations 
yield an explicit family of quasi-homomorphisms on the mapping class groups 
generalizing the Rademacher function on $SL(2,\Z)$ (which can be obtained 
for the one-holed torus), as follows:   
\begin{corollary}\label{quasi}
Let $g\geq 3$, $p\geq 5$ be  a prime number and 
$SU(m,n)$ be a  
non-compact simple factor of $\mathbb G_{p}$ corresponding to the 
primitive root of unity $A$. Let 
 $\widetilde{M}_g^{\rm univ}\subset \widetilde{M_g}$ be the 
universal central extension of $M_g$. Then there exists a unique 
quasi-homomorphism 
$L_{A}:\widetilde{M}_g^{\rm univ}\to \R$ verifying  
\[ \partial L_{A}= \frac{1}{{4\pi}(m+n)}
\widetilde{\rho}_{p,A}^*\, c_{SU(m,n)}
\]
Furthermore, the classes of those $L_{A}$, for which $1\leq m <n$,  
are linearly independent over $\Q$ in $\widetilde{QH}(\widetilde{M_g}^{\rm univ})$. 
\end{corollary}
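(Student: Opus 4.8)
The plan is to exploit the pullback construction of quasi-homomorphisms from bounded cohomology classes of Hermitian Lie groups, combined with the Zariski density statement of Theorem \ref{ZD}. First I would observe that the class $\widetilde{\rho}_{p,A}^*\,c_{SU(m,n)}$ is a bounded $2$-cocycle on $\widetilde{M}_g^{\rm univ}$ whose image in ordinary cohomology $H^2(\widetilde{M}_g^{\rm univ},\R)$ vanishes: indeed $H^2_c(SU(m,n),\R)\cong\R$ is generated by the K\"ahler class, which restricts trivially to any discrete subgroup after pullback because the corresponding continuous class is represented by an invariant $2$-form whose pullback under the (virtually) perfect group $\widetilde{M}_g^{\rm univ}$ is exact — more precisely $H^2(\widetilde{M}_g^{\rm univ},\R)=0$ since $\widetilde{M}_g^{\rm univ}$ is the universal central extension of $M_g$ and $H_2(M_g;\Z)=\Z$ is killed, so $H^2(\widetilde{M}_g^{\rm univ},\R)=0$ by the universal coefficient theorem together with $H_1=0$. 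Consequently the bounded class lies in the kernel of the comparison map $H^2_b\to H^2$, which by \cite{BI} is identified with $\widetilde{QH}(\widetilde{M}_g^{\rm univ})$; choosing the homogeneous representative yields the desired homogeneous quasi-homomorphism $L_A$ with the prescribed coboundary, and uniqueness follows because $\mathrm{Hom}(\widetilde{M}_g^{\rm univ},\R)=0$ (again as $H^1=0$), so two homogeneous quasi-homomorphisms with equal coboundary coincide.

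The substantive part is the linear independence of the classes $[L_A]$ over $\Q$, ranging over those Galois-conjugate roots of unity $A$ for which the factor is $SU(m,n)$ with $1\le m<n$. Suppose a nontrivial rational relation $\sum_A \lambda_A [L_A]=0$ holds in $\widetilde{QH}(\widetilde{M}_g^{\rm univ})$; clearing denominators, this means $\sum_A \lambda_A L_A$ is a sum of a bounded quasi-homomorphism and a homomorphism, hence (being homogeneous and with $H^1=0$) is itself bounded. Taking coboundaries, $\sum_A \lambda_A \widetilde{\rho}_{p,A}^*\,\mathcal K_{SU(m,n)}$ is the coboundary of a bounded function, i.e. it is the trivial class in $H^2_b(\widetilde{M}_g^{\rm univ},\R)$. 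The key idea is then to detect each summand separately by restricting to suitably chosen subgroups on which a single factor is "visible". Concretely, Theorem \ref{ZD} guarantees that $\ro_p(\widetilde{M}_g)$ is Zariski dense in $\mathbb G_p(\R)$ with topologically dense projections to every simple factor; using the Burger–Iozzi–Wienhard theory of maximal/Zariski-dense representations into Hermitian groups (or, more directly, Burger–Monod's injectivity results for the transfer and the functoriality of the bounded K\"ahler class under Zariski-dense representations), the pullback $\widetilde{\rho}_{p,A}^*\,\mathcal K_{SU(m,n)}$ is a nonzero element of $H^2_b(\widetilde{M}_g^{\rm univ},\R)$ for each such $A$, and moreover these pullbacks are linearly independent because they are distinguished by evaluation against the distinct simple factors — the projection to the factor indexed by $A$ kills all the bounded K\"ahler classes coming from the other factors while remaining nontrivial on its own.

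The technical mechanism I would use to make the last point rigorous is the following: for each relevant $A$ pick a surface subgroup or, more simply, use the fact that the projection $\mathrm{pr}_A\circ\ro_p$ onto the $A$-factor $SU(m,n)$ has Zariski-dense (indeed topologically dense) image; by Burger–Iozzi's result that a Zariski-dense representation into a Hermitian Lie group has nonzero pullback of the bounded K\"ahler class whenever the target has nontrivial such class (which holds precisely because $SU(m,n)$ is of non-compact Hermitian type), and by the naturality $\widetilde{\rho}_{p,A}^* \mathcal K = (\mathrm{pr}_A\circ\ro_p)^*\mathcal K_{SU(m,n)}$, one gets nonvanishing. To separate the classes, I would restrict the cocycle relation to a subgroup $H\le \widetilde{M}_g^{\rm univ}$ whose image under $\ro_p$ projects densely onto one chosen factor and with controlled (say, precompact-closure or even trivial) projection onto the others; the existence of such $H$ follows from the density statement together with a standard ping-pong / commutator construction as in \cite{FK11}. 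On such $H$ all but one of the $\widetilde{\rho}_{p,A}^*\mathcal K$ restrict to zero in bounded cohomology, forcing the remaining coefficient $\lambda_A=0$, and iterating gives the full independence.

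The main obstacle I anticipate is exactly this separation step: Zariski density of $\ro_p(\widetilde M_g)$ in the product $\mathbb G_p(\R)$ does \emph{not} immediately give a subgroup that projects densely to one factor and trivially to the others (the projections could be "entangled" across factors, as happens for irreducible lattices), so one cannot naively split the bounded cohomology of $\widetilde{M}_g^{\rm univ}$ as a direct sum over factors. The honest way around this is to use the fact that the bounded K\"ahler classes $\mathcal K_{SU(m_i,n_i)}$ of the distinct simple factors are already linearly independent in $H^2_{cb}(\mathbb G_p(\R),\R)\cong \bigoplus_i H^2_{cb}(SU(m_i,n_i),\R)$, and that pullback along a Zariski-dense representation is \emph{injective} on the subspace of $H^2_{cb}$ spanned by these K\"ahler classes — this injectivity is the genuine theorem (due to Burger–Iozzi, relying on the boundary map / Poisson transform machinery) that does the work, and the hypothesis $1\le m<n$ enters because for $m=n$ the group $SU(m,m)$ still has a K\"ahler class but the corresponding Rademacher-type function may coincide with (a multiple of) another one via an outer symmetry or via the relation to the determinant, so one restricts to $m<n$ to avoid such coincidences and guarantee the $\Q$-linear independence of the resulting quasi-homomorphism classes.
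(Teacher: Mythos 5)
Your treatment of the existence and uniqueness of $L_A$ matches the paper: both arguments reduce to $H^2(\widetilde{M}_g^{\rm univ},\R)=0$, which holds because $\widetilde{M}_g^{\rm univ}$ is the universal central extension of $M_g$, and then use the identification of $\widetilde{QH}$ with the kernel of the comparison map.

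On the linear-independence part there is a real gap, though you half-sense it yourself. Your primary plan — find subgroups $H\leq\widetilde{M}_g^{\rm univ}$ that project densely to one factor and trivially (or precompactly) to the others, then separate the classes by restriction — does not work, for the reason you state: Zariski density of a subgroup of a product of simple groups gives no control of this kind, exactly as for irreducible lattices, and nothing in Theorem~\ref{ZD} or the ping-pong constructions of~\cite{FK11} produces such an $H$. Your fallback (``the honest way'') is the right idea, and it is in fact what the paper does: invoke Burger--Iozzi directly. But the precise statement needed is (\cite{BI}, Thm.~1.3): for a finitely generated $\Gamma$ and a finite family of \emph{pairwise non-equivalent} Zariski-dense representations $\rho_j:\Gamma\to SU(m_j,n_j)$ with $1\leq m_j<n_j$, the classes $\rho_j^*\mathcal K_{SU(m_j,n_j)}\in H^2_b(\Gamma,\R)$ are linearly independent over $\Z$. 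Your reformulation in terms of injectivity of pullback on the span of K\"ahler classes of the product group is not a correct restatement of this theorem (it would be false if two factors happened to give conjugate representations of $\Gamma$); the pairwise non-equivalence hypothesis is doing essential work.

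That hypothesis is precisely what is missing from your proposal. To apply Burger--Iozzi one must know that the representations $\ro_{p,A}$ and $\ro_{p,B}$ attached to distinct roots of unity $A,B$ in $S(p)$ (i.e.\ distinct simple factors of $\mathbb G_p$) are not conjugate by an isometry of the symmetric space. This is exactly the content of Proposition~\ref{inequivalent} in the paper, which shows that $\ro_{p,A}\cong\ro_{p,B}$ forces $A=\overline B$, and this is why $S(p)$ is taken to consist of representatives modulo complex conjugation. Without invoking that proposition (or an equivalent argument distinguishing the Galois-conjugate representations, say by comparing Dehn-twist eigenvalue data), the independence claim is not established. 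Your closing speculation about the role of the hypothesis $m<n$ is also off the mark: it is a genuine technical hypothesis of Burger--Iozzi's theorem connected with tube-type symmetric spaces, not an ad hoc device to avoid ``outer symmetry'' coincidences.
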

\begin{proof}
Burger and Iozzi proved in (\cite{BI}, Thm 1.3) that for each set of  
pairwise non-equivalent (i.e. not conjugate by an 
isometry of the corresponding symmetric spaces) Zariski dense 
representations $\rho_j:\Gamma \to SU(m_j,n_j)$,  
of a finitely generated group $\Gamma$, for which 
$1\leq m_j<n_j$, the elements 
$\rho_j^*(\mathcal K_{SU(m_j,n_j)})\in H^2_b(\Gamma,\R)$ 
are linearly independent over $\Z$. 
On the other hand  $H^2(\widetilde{M}_g^{\rm univ})=0$ because 
$\widetilde{M}_g^{\rm univ}$ is a universal central extension and hence it does 
not have any non-trivial central extension. 
Therefore, the cocycle $\widetilde{\rho}_{p,A}^*c_{SU(m,n)}$ 
corresponds to an element of $\widetilde{QH}(M_g)$ and the 
corollary follows from  the above cited result in \cite{BI}. 
\end{proof}
\begin{remark}
 If $\rho': \widetilde{M_g}^{\rm univ}\to SU(m,n)$ 
is some Zariski dense representation  and $L'$ is the corresponding 
quasi-homomorphism, then $L_{A}=L'$ in $\widetilde{QH}(M_g)$ 
only if $\rho'$ is conjugate to $\ro_{p,A}$ (see \cite{BI}). 
Moreover, let 
$\overline{L}_{A}$ denote the unique homogeneous 
quasi-homomorphism in the class of $L_{A}$. 
Then $\overline{L}_{A}$ is a class function  
(i.e. invariant on conjugacy classes) on $\widetilde{M_g}^{\rm univ}$ 
which encodes all information about the representation $\ro_{p,A}$.
\end{remark}
\begin{remark}
If the groups $\rho_p(M_g)$ were finitely presented and the 
number of relations in some group presentation 
were uniformly bounded (for fixed $g$),  
then the rank of $H^2(\rho_p(M_g))$ 
would also be uniformly bounded  and  
our density theorem  would imply that 
$\widetilde{QH}(\rho_{p}(M_g))$  cannot be trivial for large enough $p$.  
For instance the uniform bound holds if we replace $\ker \rho_p$ above  
by the subgroup generated by $p$-th powers of all  
Dehn twists and one might conjecture that the two subgroups 
coincide in genus $g\geq 3$. Eventually, if $\widetilde{QH}(\rho_{p}(M_g))$ 
were infinite dimensional then $\rho_p(M_g)$ would not be boundedly generated. 
\end{remark}

Recall now the following recent result due to Salehi Golsefidy and Varj\'u 
(see \cite{SGV}, Thm. 1 and Cor. 5): Let $\Gamma\subset GL(N,\Z)$  be a group 
generated by a symmetric set $S$ and denote by $\pi_q:GL(N,\Z)\to GL(N,\Z/q\Z)$ 
the reduction mod $q$. Then the family of Cayley graphs of the 
groups $\pi_q(\Gamma)$ with generator systems $\pi_q(S)$ forms 
a family of expanders (see \cite{LZ} for details about expanders), 
where $q$ runs through square-free integers, 
if and only if the connected component of the Zariski closure of 
$\Gamma$ is perfect. This family of Cayley graphs forms expanders  iff  
$\Gamma$ has property  $\tau$ with respect to the family of finite 
quotients  $\pi_q(\Gamma)$, where $q$ runs through square-free integers, 
namely if there exists some $\varepsilon>0$  (depending on $S$) such that for every 
unitary representation $\rho$ of $\Gamma$ into some Hilbert space $H$ 
factorizing through some $\pi_q(\Gamma)$ without invariant nontrivial 
vector and every unit vector $v\in H$ we have $|\rho(s)v-v| \geq \varepsilon$, 
for some $s\in S$. Here $|*|$ denotes the norm in $H$. 
Observe now that the (complex) Zariski closure of 
$SU(m,n)$ in $GL(m+n,\C)$ is $SL(m+n,\C)$.     
In particular, our density result implies that 
the Zariski closure of $\ro_p(\widetilde{M_g})$ (within the appropiate 
product of copies of $GL(N(p,g),\C)$) is the complex Zariski closure of $\mathbb G_p$, 
namely the product of several copies of $SL(N(p,g),\C)$. 
Since the Zariski closure is perfect then  by (\cite{SGV}, Thm.1, but Cor.5 would also suffice) we obtain the following:  

\begin{corollary}\label{tau}
For every prime $p\geq 5$,  the groups 
$\ro_p(\widetilde{M_g})$ (and  
$\ro_p(I_g(k))$, $k\leq 3$) have property $\tau$ with respect to 
the family of finite quotients induced by $\pi_q$, 
where $q$ runs through square-free integers.   
\end{corollary}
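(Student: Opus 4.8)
The plan is to deduce the statement directly from Theorem \ref{ZD} together with the theorem of Salehi Golsefidy and Varjú recalled just above. First I would fix the ambient representation: by \cite{GM} the group $\ro_p(\widetilde{M_g})$ sits inside a product of unitary groups $\prod_{\sigma\in S(p)} S\mathbb U^\sigma$ with entries in the ring of integers $\mathcal O_p$, and after choosing a $\Z$-basis of $\mathcal O_p$ this realizes $\ro_p(\widetilde{M_g})$ as a subgroup of $GL(N,\Z)$ for a suitable $N$ (a multiple of $N(p,g)$ times the degree of the cyclotomic field). Fix any symmetric generating set $S$ of this finitely generated group; note $\widetilde{M_g}$ is finitely generated, hence so is its image. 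The reductions $\pi_q$ are then exactly the ones appearing in \cite{SGV}.

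Next I would identify the Zariski closure. By Theorem \ref{ZD}, $\ro_p(\widetilde{M_g})$ is Zariski dense in $\mathbb G_p(\R)$, and the projections to the simple factors are topologically (hence Zariski) dense. Passing to the complex points, the Zariski closure inside $\prod GL(N(p,g),\C)$ is the complexification of $\mathbb G_p$: since the complex Zariski closure of each $S\mathbb U^\sigma\subset GL(N(p,g),\C)$ is $SL(N(p,g),\C)$ — this is the standard fact that a real form of $SL_n$ is Zariski dense in $SL_n(\C)$ — and since the factor-projections are surjective onto these, the closure is the product $\prod_{\sigma} SL(N(p,g),\C)$, as observed in the paragraph preceding the corollary. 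In particular the Zariski closure is connected and equals its own commutator subgroup, because $SL_n(\C)$ is its own commutator subgroup and a product of perfect groups is perfect; so the connected component of the Zariski closure is perfect, which is precisely the hypothesis of (\cite{SGV}, Thm.~1).

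Applying (\cite{SGV}, Thm.~1) then gives that the Cayley graphs of $\pi_q(\ro_p(\widetilde{M_g}))$ with generators $\pi_q(S)$ form a family of expanders as $q$ ranges over square-free integers, which by the discussion above is equivalent to property $\tau$ of $\ro_p(\widetilde{M_g})$ with respect to the family $\{\pi_q\}$. For the subgroups $\ro_p(I_g(k))$ with $k\le 3$: by Proposition \ref{chain3} (used in the proof of Theorem \ref{fquot}) the image $\rho_p(I_g(3))$ has finite index in $\rho_p(M_g)$, hence $\ro_p(I_g(k))$ has finite index in $\ro_p(\widetilde{M_g})$ for $k\le 3$; it is therefore still finitely generated and Zariski dense in $\mathbb G_p(\R)$, so the same argument applies verbatim — alternatively one invokes the standard fact that property $\tau$ (with respect to a congruence-type family) passes to finite-index subgroups. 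I do not expect a serious obstacle here: the only point requiring care is matching the congruence family $\pi_q$ on the $GL(N,\Z)$-model with the family of finite quotients in the statement, and checking that the integrality set-up of \cite{SGV} (square-free $q$, symmetric $S$) is met, which is immediate from the $\mathcal O_p$-lattice structure of \cite{GM}.
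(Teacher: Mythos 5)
Your proposal is correct and takes essentially the same route as the paper: identify the complex Zariski closure of $\ro_p(\widetilde{M_g})$ as a product of copies of $SL(N(p,g),\C)$ via Theorem~\ref{ZD} and the fact that $SU(m,n)$ is a real form of $SL(m+n,\C)$, observe this is perfect, and apply the Salehi Golsefidy--Varj\'u theorem. You also spell out the finite-index reduction to $\ro_p(I_g(k))$, $k\le 3$, via Proposition~\ref{chain3}, which the paper leaves implicit in the paragraph preceding the corollary.
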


\begin{remark}
Bourgain and Gamburd proved (see \cite{BGa,BGa2}) that a dense subgroup 
of $SU(N)$ generated by  matrices with algebraic entries, and 
in particular $\ro_p(\widetilde{M_g})$, has a spectral gap 
with respect to the natural action on $L^2(SU(N))$.  
\end{remark}
\begin{remark}
On the other hand these results 
cannot be considered as evidence in the favor of the claim that  
$\ro_p(\widetilde{M_g})$ has  property $\tau$ with respect 
to the family of {\em all } its finite quotients. In fact, by \cite{BG} 
there exists a free subgroup on two generators 
$L\subset \ro_p(\widetilde{M_g})$ which is Zariski 
dense in $\mathbb G_p(\R)$. Although a free group has not property $\tau$ 
with respect to all its finite quotients, 
the group  $L$ has property $\tau$ with respect to the family of finite 
quotients induced by $\pi_q$, where $q$ runs through square-free integers, 
by the result from \cite{SGV}.  It would be very 
interesting to find whether $\widetilde{M_g}$ has property $\tau$ 
with respect to the family of finite quotients  
$\pi_{q}(\ro_p(\widetilde{M_g}))$, where $q\geq q(p)$ are sufficiently large, 
possibly square-free and $p$ runs over the primes. 
\end{remark}

\vspace{0.2cm}\noindent 
{\bf Acknowledgements.}  I am indebted to Yves Benoist, Philippe Gille, 
Olivier Guichard, 
Greg Kuperberg, Toshitake Kohno, Greg McShane, 
Gregor Masbaum, Mahan Mj, Alan Reid, Adrian Vasiu and Tyakal N. Venkataramana   
for useful discussions and advice. 
The author was partially supported by the ANR grant ModGroup.

\section{Proof of Theorem \ref{ZD}}

The starting point is the following result from \cite{LW}, 
subsequently improved in \cite{DW}: 

\begin{proposition}[\cite{LW}]\label{LW}
If $g\geq 2$, $p\geq 5$ is prime and $(g,p)\neq (2,5)$, then 
$\rho_{p,A_p}(M_g)$ is (topologically) dense 
in $SU(N(p,g))$, where 
$N(p,g)$ is the dimension of the space of conformal blocks 
associated to the closed orientable 
surface $\Sigma_g$ of genus $g$. 
\end{proposition}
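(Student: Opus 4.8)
Since $\rho_{p,A_p}$ is only a projective unitary representation, I read the statement as: the closure of the image in $PU(N(p,g))$ is the whole group, equivalently — pulling back along the central extension — $\ro_{p,A_p}(\widetilde{M_g})$ is dense in $SU(N(p,g))$. Write $\overline{G}$ for this closure, a compact Lie subgroup of $SU(N(p,g))$. By the irreducibility of the $SO(3)$ TQFT representations at a prime (Roberts) it acts irreducibly on the space of conformal blocks $V_p(\Sigma_g)$, so the task is to show that an irreducible closed subgroup of $SU(N(p,g))$ containing $\rho_{p,A_p}(M_g)$ cannot be proper. I would follow the inductive route of Larsen and Wang.

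Rather than attacking closed surfaces directly, the plan is to set up an induction over compact surfaces $\Sigma_{g,n}$ with colored boundary, proving that $\ro_p$ has dense image in the product $\prod_{\vec c} SU(V_p(\Sigma_{g,n};\vec c))$ taken over all admissible boundary colorings $\vec c$. The gluing axiom of $\mathcal V_p$ links these statements: cutting $\Sigma_g$ along a non-separating simple closed curve $\gamma$ gives a canonical identification $V_p(\Sigma_g)\cong\bigoplus_j V_p(\Sigma_{g-1,2};j,j)$, and more generally it expresses the restriction of $\ro_p$ to a subsurface mapping class group as a block sum indexed by the colors on the cutting curves. The base of the induction is the planar case $\Sigma_{0,n}$, where the quantum representation is a block of the Jones representation of the braid group through the Temperley--Lieb algebra; here density for $p\geq 5$ is the Freedman--Larsen--Wang solution of the ``two-eigenvalue problem'' (a standard braid generator acts with only two distinct eigenvalues). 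The remaining low cases $\Sigma_{1,n}$ and $\Sigma_{2,0}$ are reached by cutting along a separating curve and by explicit computation with the $S$- and $T$-matrices, and this is exactly where the genuinely exceptional pair $(g,p)=(2,5)$ must be excluded.

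For the inductive step, cut $\Sigma_g$ along $\gamma$, so that $V_p(\Sigma_g)=\bigoplus_j W_j$ with $W_j=V_p(\Sigma_{g-1,2};j,j)$. The mapping class group of the complementary subsurface $\Sigma_{g-1,2}$ acts block-diagonally with respect to this decomposition, and by the inductive hypothesis its closure surjects onto $\prod_j SU(W_j)$; hence $\overline{G}$ contains $\prod_j SU(W_j)$. A Dehn twist along a simple closed curve meeting $\gamma$ exactly once is, in contrast, \emph{not} block-diagonal — its off-diagonal entries are governed by the $S$-matrix and do not all vanish. The conclusion then rests on a Lie-theoretic lemma: a closed connected subgroup of $SU(\bigoplus_j W_j)$ that contains $\prod_j SU(W_j)$ and acts irreducibly must be all of $SU(\bigoplus_j W_j)$, since its Lie algebra already contains each $\mathfrak{su}(W_j)$ and, by irreducibility, must contain a mixed summand $W_i\otimes W_j^{*}$, which generates the rest of $\mathfrak{su}(N(p,g))$. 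Along the way one also rules out that $\overline{G}$ is a proper irreducible subgroup of orthogonal, symplectic, or tensor-decomposable type, using the eigenvalue multiplicities of the Dehn twists (whose eigenvalues are the explicit roots of unity $(-1)^i A^{i^2+2i}$) together with the size of $N(p,g)$; this is where the hypotheses $p\geq 5$ and $(g,p)\neq(2,5)$ enter.

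The step I expect to be the real obstacle is the interface between the base cases and this degeneracy analysis: one needs the dimensions $N(p,g)$ and $\dim V_p(\Sigma_{0,n};\vec c)$ and the $S$-matrix entries concretely enough to simultaneously invoke Freedman--Larsen--Wang in the planar case, check that the mixing twists are non-block-diagonal, exclude the orthogonal/symplectic/tensor alternatives, and isolate the single exceptional pair $(g,p)=(2,5)$. The combinatorial bookkeeping of admissible colorings — in particular verifying that all the blocks $W_j$ occurring in the induction are nontrivial — is the technical heart of the proof.
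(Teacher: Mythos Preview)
The paper does not prove this proposition at all: it is stated as a quotation from Larsen--Wang \cite{LW} (with the refinement in \cite{DW}) and used as a black box, so there is no ``paper's own proof'' to compare against. Your sketch is a faithful outline of the actual Larsen--Wang argument --- induction over surfaces with colored boundary, the Freedman--Larsen--Wang two-eigenvalue input in the planar base case, the gluing/cutting step producing $\prod_j SU(W_j)$ inside $\overline{G}$, and the Lie-algebra lemma promoting this to the full $SU(N(p,g))$ --- so it is correct in spirit and matches the cited source rather than anything in the present paper.
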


A cautionary remark is in order. There are two TQFTs 
which might reasonably be called the $SO(3)$-TQFT and although their 
differences are rather minimal, they are distinct (see the discussion in 
\cite{LW}). We work here with the version from 
\cite{BHMV}, in order to have a common approach for both 
$SU(2)$-TQFT and the $SO(3)$-TQFT.  However, the proof from 
\cite{LW}, although stated for one version,  works also for the 
other version of the TQFT.

The Hermitian form $H$ associated to the TQFT $\mathcal V_p$ and the 
root of unity $A$ has signature $(N_+(p,g,A), N_-(p,g,A)$, where 
$N_+(p,g,A)+ N_-(p,g,A)= N(p,g)$. We prove first: 

\begin{proposition}\label{weakZD}
Let $g\geq 3$ and $A$ be any primitive $2p$-th root of unity. 
Then $\rho_{p,A}(M_g)$ is (topologically) dense 
in $SU(N_+(p,g,A), N_-(p,g,A))$. 
\end{proposition}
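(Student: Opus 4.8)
The plan is to bootstrap from Proposition \ref{LW}, which handles the \emph{compact} unitary group corresponding to the standard root of unity $A_p$, and to propagate density to all the Galois-conjugate Hermitian forms, including the indefinite ones. First I would observe that for a primitive $2p$-th root of unity $A$ with $p\equiv -1\pmod 4$ prime, the field $\Q(A)=\Q(\zeta_p)$ and the various choices of $A$ correspond precisely to the Galois conjugates of $A_p$ under $\mathrm{Gal}(\Q(\zeta_p)/\Q)$, restricted modulo complex conjugation to the set $S(p)$. Since the mapping class group $M_g$ is finitely generated and its generators (Dehn twists) act by matrices whose entries are explicit elements of $\mathcal O_p$, the representation $\rho_{p,\sigma(A_p)}$ is literally the Galois conjugate $\sigma\circ\rho_{p,A_p}$ of the representation $\rho_{p,A_p}$. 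Thus the Zariski closure of $\rho_{p,A}(M_g)$ is the $\sigma$-conjugate of the Zariski closure of $\rho_{p,A_p}(M_g)$.

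Next, Proposition \ref{LW} tells us that $\rho_{p,A_p}(M_g)$ is topologically dense in the compact group $SU(N(p,g))$, hence it is \emph{Zariski dense} in $SL(N(p,g),\C)$ (a compact real form is always Zariski dense in its complexification, since a proper algebraic subgroup would have to be a proper closed subgroup in the Hausdorff topology too). Applying the Galois automorphism $\sigma$, we deduce that $\rho_{p,\sigma(A_p)}(M_g)$ is Zariski dense in $SL(N(p,g),\C)$ as well. Now $SU(N_+,N_-)$ is another real form of $SL(N(p,g),\C)$, and $\rho_{p,A}(M_g)$ lies inside it by \cite{GM}/\cite{DW}. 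So the situation is: a subgroup $\Gamma$ of the real Lie group $G=SU(N_+,N_-)$ which is Zariski dense in the complexification $G(\C)=SL(N,\C)$, and we must conclude $\Gamma$ is topologically dense in $G$. For this I would invoke the structure theory of closed subgroups of semisimple Lie groups: the closure $\overline{\Gamma}$ is a closed subgroup of $G$ whose identity component is a connected Lie subgroup; being Zariski dense, $\overline{\Gamma}$ cannot be contained in any proper algebraic subgroup, and in particular its Lie algebra must be an ideal of $\mathfrak{su}(N_+,N_-)$ that is not contained in any proper parabolic or reductive subalgebra — forcing $\overline{\Gamma}^\circ = G$ (using that $\mathfrak{su}(N_+,N_-)$ is simple, so its only ideals are $0$ and itself, and a nonzero ideal which Zariski-spans $\mathfrak{sl}(N,\C)$ must be everything). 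Since $G$ is connected, $\overline\Gamma = G$.

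The main obstacle I anticipate is the last step: passing from Zariski density in $SL(N,\C)$ to topological density in the real form $SU(N_+,N_-)$ when the form is indefinite. In the compact case this is immediate, but for a non-compact real form one must rule out the possibility that $\overline{\Gamma}$ is a proper closed (necessarily non-algebraic, e.g. dense-in-its-Zariski-closure but lower-dimensional) subgroup. The clean way is the dichotomy: $\overline{\Gamma}^{\circ}$ is normal in $\overline{\Gamma}$, its normalizer is algebraic, and since $\Gamma$ normalizes $\overline{\Gamma}^{\circ}$ and $\Gamma$ is Zariski dense, $\overline{\Gamma}^{\circ}$ is normal in $G(\C)$; as $\mathfrak{sl}(N,\C)$ is simple this gives $\overline{\Gamma}^{\circ}=G$. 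Then $\overline{\Gamma}=G$ because $SU(N_+,N_-)$ is connected. I would also note the trivial degenerate case: if $\{N_+,N_-\}=\{0,N\}$ we are back in Proposition \ref{LW}, and if $N=1$ the statement is vacuous, so one may assume $N_+,N_-\geq 1$ with $N_++N_-\geq 2$, where $\mathfrak{su}(N_+,N_-)$ is indeed simple.
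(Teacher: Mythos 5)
Your first two steps match the paper exactly: Galois conjugation transports Zariski density from the compact case to every $SU(N_+,N_-)$, so $\rho_{p,A}(M_g)$ is Zariski dense for every $A$. The problem is the final step, where you try to upgrade Zariski density to topological density purely by abstract Lie theory, and this is where there is a genuine gap.

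Your normalizer argument correctly shows that $\overline{\Gamma}^{\circ}$ is a connected subgroup whose Lie algebra is an ideal in $\mathfrak{su}(N_+,N_-)$; by simplicity it is either the whole Lie algebra or zero. You conclude $\overline{\Gamma}^{\circ}=G$, but you have not ruled out the zero case, i.e.\ $\overline{\Gamma}^{\circ}=\{e\}$. That is precisely the case where $\Gamma$ is a \emph{discrete} Zariski dense subgroup, and for the indefinite forms $SU(N_+,N_-)$ with $N_+,N_-\geq 1$ such subgroups certainly exist (lattices — indeed the paper's Theorem~\ref{ZD} asserts that $\ro_p(\widetilde{M_g})$ as a whole \emph{is} a discrete Zariski dense subgroup of the product $\mathbb{G}_p(\R)$). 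So "Zariski dense in a non-compact simple real group" only gives the dichotomy "topologically dense or discrete", never density outright. In the compact case ($N_-=0$) discreteness is impossible for an infinite group, which is why your reasoning is painless there, but for the indefinite factors you must produce additional input to exclude discreteness.

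The paper supplies exactly this missing input: it shows (Lemma~\ref{contain} and the surrounding discussion) that the restriction of $\rho_{p,A}$ to a copy of $PB_4\subset M_{0,5}\subset M_g$ contains the reduced Burau representation $\beta_{q_p(A)}(PB_4)$ as a sub-representation, and by Freedman--Larsen--Wang / Kuperberg that Burau image is topologically dense in $SU(3)$ or $SU(2,1)$. Hence $\rho_{p,A}(M_g)$ cannot be discrete, and the dichotomy now forces topological density. Without an argument of this kind — exhibiting a concrete non-discrete subgroup inside the image — your proof does not go through for the indefinite forms, which is the nontrivial part of the statement.
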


Two representations into the same group $G$ are called 
equivalent if there exists an isomorphism of $G$ 
which intertwines them. It is easy to see that 
\[ \ro_{p,\overline{A}} = \overline{\ro_{p,A}}\]
and therefore $\ro_{p,A}$ and $\ro_{p,\overline{A}}$ are equivalent. 
The converse is provided by the following: 

\begin{proposition}\label{inequivalent}
Let $g\geq 3$, $A$ and $B$ be distinct primitive $2p$-th roots of unity. 
The the representations $\ro_{p,A}$ and $\ro_{p,B}$ are 
equivalent if and only if $A=\overline{B}$. 
\end{proposition}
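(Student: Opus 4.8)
The plan is to distinguish the representations $\ro_{p,A}$ and $\ro_{p,B}$ by extracting a conjugation-invariant, hence intertwining-invariant, quantity that records the root of unity $A$ up to complex conjugation. The natural candidate is the trace of the image of a single Dehn twist $T_\gamma$ along a non-separating simple closed curve $\gamma$. Under the TQFT $\mathcal V_p$, such a twist acts on the space of conformal blocks (in a basis labelled by admissible colorings of a trivalent spine of $\Sigma_g$) diagonally, with eigenvalues that are explicit monomials in $A$: up to a global scalar $\mu_A$ (itself a power of $A$, absorbed by passing to the linear representation $\ro_{p,A}$ of $\widetilde{M_g}$), the eigenvalue associated to an edge colored $c$ is $(-1)^c A^{c(c+2)}$, i.e. a Galois-twisted version of the standard twist eigenvalues. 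First I would write down $\operatorname{tr}\ro_{p,A}(T_\gamma)$ as an explicit Laurent polynomial $P_g(A)$ with integer (in fact, $\{0,\pm1\}$ after collecting) coefficients, namely $P_g(A)=\sum_c m_g(c)\,(-1)^c A^{c(c+2)}$ where $m_g(c)\in\Z_{>0}$ is the number of admissible colorings of the genus-$g$ spine having color $c$ on the fixed edge corresponding to $\gamma$.

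Since an isomorphism of $S\mathbb U$ intertwining the two representations carries $\ro_{p,A}(T_\gamma)$ to a matrix conjugate to $\ro_{p,B}(T_\gamma)$ — here one uses that $T_\gamma$ lifts canonically once a framing/path to $\widetilde{M_g}$ is fixed, or more robustly that the set of eigenvalues-with-multiplicity of the image of a Dehn twist along a non-separating curve is an invariant of the equivalence class, because all such twists are conjugate in $M_g$ and the central ambiguity is resolved uniformly — equivalence of $\ro_{p,A}$ and $\ro_{p,B}$ forces $P_g(A)=P_g(B)$, and more: it forces the full multiset of twist eigenvalues to agree. The second step is therefore purely arithmetic: show that for $g\geq 3$ the equality of these eigenvalue multisets, $\{(-1)^c A^{c(c+2)} : c\}=\{(-1)^c B^{c(c+2)} : c\}$ (with multiplicities $m_g(c)$), forces $B\in\{A,\overline{A}\}$. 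Because $A,B$ are primitive $2p$-th roots of unity with $p\geq 5$ prime, we can write $B=A^k$ for some $k$ coprime to $2p$, i.e. $k$ is a unit mod $2p$; the Galois group $(\Z/2p\Z)^\times$ acts on the candidate roots, and complex conjugation corresponds to $k\equiv -1$. So it suffices to rule out $k\not\equiv\pm1 \pmod{2p}$.

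The cleanest route for this final arithmetic step is to exploit the multiplicities rather than just the support of $P_g$: for $g\geq 3$ the coloring count $m_g(c)$ is strictly monotone (or at least non-constant in a controlled way) as a function of $c$ over the admissible range $0\leq c\leq p-2$, so the polynomial $P_g$ genuinely ``remembers'' which eigenvalue had which multiplicity. Concretely, I would argue that the multiset equality pairs the smallest exponent appearing with nonzero low multiplicity — the color $c=0$ eigenvalue, which is just the scalar $1$ with multiplicity $m_g(0)=N(p-1,g)$ or similar — against the corresponding term for $B$; matching up the distinguished eigenvalue $1$ (color $0$) and then the next, $-A^{3}$ (color $1$), already pins down $A^3=B^{\pm 3}$ together with a sign/parity constraint coming from $(-1)^c$, and combined with $B=A^k$ and primitivity this yields $k\equiv\pm1\pmod{2p}$. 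The main obstacle I anticipate is exactly this last monotonicity/injectivity claim about the coloring multiplicities $m_g(c)$ for $g\geq 3$: one must check that the Verlinde-type formula for the number of admissible colorings with a prescribed color on one edge is non-degenerate enough that $P_g$ determines the pair $(A,\overline A)$, and that no accidental coincidence among the quadratic exponents $c(c+2)$ modulo $2p$ (which can collapse distinct colors to the same eigenvalue) destroys this — here the hypotheses $p$ prime and $g\geq 3$ (so that enough colors have positive multiplicity and the genus-dependent weights break symmetry) are what make the argument go through, and I would isolate this as the technical heart of the proof while the TQFT input (diagonalizing the twist, identifying eigenvalues) is standard from \cite{BHMV}.
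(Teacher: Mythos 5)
Your approach differs from the paper's in a way that leaves a genuine gap, which you correctly identify as ``the technical heart'' but do not resolve. You propose to distinguish $\ro_{p,A}$ from $\ro_{p,B}$ using only the eigenvalue multiset (with multiplicities $m_g(c)$) of a \emph{single} Dehn twist along a non-separating curve. This is a strictly weaker invariant than what the paper uses, and the two obstructions you flag are real. First, ``equivalence'' here means intertwining by an automorphism of the pseudo-unitary group, and by Rickart's classification such an automorphism has the form $x\mapsto \chi(x)\,VxV^{-1}$ with $V$ semi-linear and $\chi$ a character; the character factor rescales \emph{all} eigenvalues of $\ro_{p,A}(\widetilde{T}_\gamma)$ by a constant, so your claim that equivalence carries $\ro_{p,A}(T_\gamma)$ to something \emph{conjugate} to $\ro_{p,B}(T_\gamma)$ needs the separate lemma (which the paper proves using $H_1(\widetilde{M_g})=\Z/12\Z$, perfectness of $M_g$, and $\ro_{p,A}(c)=A^{-12}$) that $\chi$ may be taken trivial. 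Second, and more seriously, the map $c\mapsto c(c+2)\bmod 2p$ is quadratic and will in general identify distinct colors, so the eigenvalue multiset only sees sums of the $m_g(c)$ over fibers of this map; you would need both an explicit non-degeneracy statement for the Verlinde-type multiplicities $m_g(c)$ \emph{and} control of these quadratic collisions, neither of which you supply. Pairing off ``the smallest exponent with low multiplicity'' is not obviously well-defined once collisions occur, and the claimed deduction $A^3=B^{\pm 3}$ from matching colors $0$ and $1$ does not follow from multiset equality alone.

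The paper avoids this entirely by using a strictly richer invariant: it simultaneously diagonalizes the lifts $\widetilde{T}_{\gamma_i}$ for a whole system $C$ of disjoint curves, identifies the joint eigenspaces $W(\Lambda,C,A)$ with conformal blocks of complementary subsurfaces with colored boundary, and shows the intertwiner $V$ must carry $W(\Lambda,C,A)$ isomorphically to $W(\Lambda,C,B)$. This upgrades the eigenvalue bijection to a bijection $f$ on \emph{colors} that is compatible with the conformal-block structure of every essential subsurface. One then pins down $f$ using only the dimensions of conformal blocks on a $4$-holed and a $3$-holed sphere: $f(0)=0$; if $f(1)>1$ (resp.\ $f(2)>2$ in the odd case) the dimension of the corresponding $\Sigma_{0,4}$ block would be too large; the Clebsch--Gordan admissibility for $\Sigma_{0,3}$ then forces $f(2)=2$. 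This yields $A^3=B^3$, $A^8=B^8$ (or $A^8=B^8$ alone in the odd case), hence $A=B$ (linear case) or $A=\overline{B}$ (anti-linear case) with no appeal to multiplicity formulas or monotonicity of $m_g(c)$. In short, your strategy could conceivably be completed, but it requires nontrivial arithmetic input about Verlinde numbers and quadratic collisions that you leave unproved, whereas the paper replaces that analytic step by structural information about how the intertwiner respects the decomposition into subsurface conformal blocks.
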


\begin{remark}
The same proof actually shows that the projective 
representations $\rho_{p,A}$ and $\rho_{p,B}$ are 
equivalent if and only if $A=B$ or  $A=\overline{B}$.
\end{remark}

Let now $\Gamma\to H_i$, $i=1,\ldots,m$, 
be a collection of representations of the group $\Gamma$.  
The subgroup $H\subset \prod_{i=1}^mH_i$ is called 
$\Gamma$-diagonal, if there exists a partition $A_1,\ldots,A_s$ of 
$\{1,2,\ldots,m\}$ such that: 
\begin{enumerate}
\item All factors $H_i$, with $i\in A_t$, $1\leq t\leq s$ are 
equivalent as  representations of $\Gamma$.  Pick up some $i_t\in A_t$.  
Given some  intertwining isomorphisms 
 $L_{j,i_t}: H_j\to H_{i_t}$,  $j\in A_t\setminus\{i_t\}$, we set:  
\[H_{A_t}=\{(x,  
(L_{j,i_t}(x))_{j\in A_t\setminus\{i_t\}}), \;\;  
x\in H_{i_t}\},\]    
which is the graph of  the 
homomorphism $\oplus_{j\in A_t\setminus\{i_t\}} L_{j,i_t}$.
\item Then there exist intertwining isomorphisms  as above with the property 
that the group $H$ contains $\prod_{1\leq t\leq s} H_{A_t}$. 
In particular, if all representations $H_i$ of $\Gamma$ are 
pairwise inequivalent, then 
$H=\prod_{i=1}^mH_i$.
\end{enumerate}

We then have the following Hall lemma from \cite{Ku}: 

\begin{lemma}[Hall Lemma]
Let $\Gamma$ be a subgroup of  the product $\prod_{i=1}^mH_i$ of the 
adjoint simple (i.e. connected, without center and whose Lie algebra 
is simple) Lie groups $H_i$.  
Assume that the projection of $\Gamma$ on each factor $H_i$ 
is Zariski dense. Then the Zariski closure of $\Gamma$ 
in  $\prod_{i=1}^mH_i$ is a $\Gamma$-diagonal subgroup.  
\end{lemma}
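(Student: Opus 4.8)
The plan is to reduce the statement to the classical fact that a Zariski-closed subgroup of a product of simple adjoint groups, projecting onto each factor, is a fibre product over a partition given by isomorphisms of factors. First I would let $G$ denote the Zariski closure of $\Gamma$ in $\prod_{i=1}^m H_i$; it suffices to analyze $G$, since it is a Zariski-closed subgroup whose projection $p_i(G)$ onto each $H_i$ is again Zariski closed and, by hypothesis, dense — hence equal to $H_i$. So from now on I work with an algebraic subgroup $G\subseteq \prod H_i$ surjecting onto each factor, and I want to show $G$ is $\Gamma$-diagonal in the sense defined above.

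The next step is Goursat's lemma in its algebraic-group form. I would argue by induction on $m$. For $m=1$ the statement is trivial. For the inductive step, set $G' = p_{\{1,\dots,m-1\}}(G) \subseteq \prod_{i<m} H_i$; by induction $G'$ is a fibre product over a partition of $\{1,\dots,m-1\}$. Now consider the two normal subgroups $K = G \cap (\prod_{i<m} H_i \times \{1\})$ and $K_m = G \cap (\{1\}\times\cdots\times\{1\}\times H_m)$. Goursat's lemma identifies $G$ with the fibre product of $G' = G/K_m$ and $H_m = G/K$ over the common quotient $G/(K\cdot K_m)$. The key algebraic input is that the $H_i$ are \emph{simple}: the common quotient $Q = G/(KK_m)$ is simultaneously a quotient of $G' \subseteq \prod_{i<m}H_i$ and of $H_m$. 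A quotient of a simple adjoint group is either trivial or the group itself, so either $Q$ is trivial — in which case $G = G' \times H_m$ and $m$ lands in a singleton block of the partition — or $Q \cong H_m$ via an isomorphism, and the projection $G' \to Q$ factors (because $H_m$ is simple and the projections of $G'$ are onto each $H_i$) through projection to one of the coordinates $i_0 < m$ composed with an isomorphism $H_{i_0} \to H_m$. In the latter case $H_m$ is glued, via this isomorphism, onto the block containing $i_0$, which enlarges that block and adds the graph factor $H_{A_t}$ of the defining statement; all other blocks are unchanged.

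The main obstacle — and the place that requires care rather than formal bookkeeping — is the second alternative: showing that when $Q$ is nontrivial, the surjection $G' \twoheadrightarrow Q \cong H_m$ must factor through a \emph{single} coordinate projection $p_{i_0}$ up to an isomorphism of simple groups, rather than through some more complicated quotient of $G'$. This is where simplicity (no center, simple Lie algebra) is used essentially: the kernel of $G' \to Q$ is a normal algebraic subgroup of $G'$, hence — because $G'$ surjects onto each simple $H_i$ and is itself a fibre product — a product of a sub-collection of the $H_i$'s together with diagonal pieces, and the quotient by it is forced to be isomorphic to exactly one remaining simple factor $H_{i_0}$; the composite $H_{i_0}\xrightarrow{\sim} Q \xrightarrow{\sim} H_m$ is the required intertwining isomorphism $L_{m,i_0}$. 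Once this is established, iterating over $i=1,\dots,m$ assembles the partition $A_1,\dots,A_s$ and the graph subgroups $H_{A_t}$, and since $\Gamma$ is Zariski dense in $G$, the equivalences of the factors $H_i$ for $i$ in a common block are equivalences as representations of $\Gamma$, exactly as required in the definition of a $\Gamma$-diagonal subgroup.
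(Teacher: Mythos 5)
The paper does not supply a proof of this lemma: it simply attributes it to Kuperberg~\cite{Ku} and moves on. So there is no in-paper argument to compare against. Judged on its own, your proof is correct, and it is the standard (and, to my knowledge, the same as the cited source's) route: pass to the Zariski closure $G$, observe that $p_i(G)=H_i$, and induct on $m$ via Goursat's lemma, letting $K=\ker(p_m|_G)$ and $K_m=\ker(p_{\{1,\dots,m-1\}}|_G)$ and analysing the common quotient $Q=G/(KK_m)$. Since $Q$ is a quotient of the abstractly simple group $H_m$ (adjoint $+$ connected $+$ simple Lie algebra forces abstract simplicity), $Q$ is trivial or $Q\cong H_m$, and in the second case you correctly reduce to the structure theorem for normal subgroups of a finite product of non-abelian simple groups applied to $G'\cong\prod_t H_{A_t}$. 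The final observation that Zariski density of $\Gamma$ in $G$ forces the gluing isomorphisms $L_{j,i_t}$ to intertwine the $\Gamma$-actions is exactly what upgrades the purely group-theoretic fibre-product description to a $\Gamma$-diagonal subgroup in the sense defined, and you do state it.

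One small point of precision rather than a gap: in the crucial step you motivate the factorisation of $G'\twoheadrightarrow Q$ through a single coordinate by saying ``because $G'$ surjects onto each simple $H_i$ and is itself a fibre product.'' What is really doing the work is that, by the inductive hypothesis, $G'$ is as an abstract (or algebraic) group isomorphic to the product $\prod_t H_{A_t}$ of finitely many centerless simple factors, and every normal subgroup of such a product is a sub-product of blocks (the surjectivity onto the $H_i$ is not the point, and by itself would not suffice). Stating it this way would make the step airtight; the Lie-algebra version (an ideal in a direct sum of simple Lie algebras is a sub-sum, and centerlessness kills the discrete part of the kernel) is perhaps the cleanest way to nail it down in the algebraic-group category.
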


Therefore, the Hall lemma above shows that the Zariski closure of 
$\rho_p(M_g)$ is all of $\mathbb P\mathbb G_p(\R)$.
Now using (\cite{Ku}, Lemma 3.6) we obtain that 
$\ro_p(\widetilde{M_g})$ is Zariski dense in $\mathbb G_p(\R)$.  
This proves the theorem.

\section{Proof of Proposition \ref{weakZD}}

Recall that the (reduced) 
Burau representation $\beta:B_n\to GL(n-1,\Z[q,q^{-1}])$ 
is defined on the standard generators $g_1,g_2,\ldots,g_{n-1}$ 
of the braid group $B_n$ on $n\geq 3$ strands by the matrices: 
\[ \beta_q(g_1)=\left(\begin{array}{cc}
-q & 1 \\
0  & 1 \\
\end{array}
\right) \oplus {\mathbf 1}_{n-3},\]
\[ \beta_q(g_j)={\mathbf 1}_{j-2}\oplus 
\left(\begin{array}{ccc}
1 & 0 & 0 \\
q & -q & 1 \\
0 & 0  & 1 \\
\end{array}
\right) \oplus {\mathbf 1}_{n-j-2}, \:\: {\rm for} \:\: 2\leq j\leq n-2,\]
\[ \beta_q(g_{n-1})={\mathbf 1}_{n-3}\oplus 
\left(\begin{array}{cc}
1 & 0 \\
q & -q \\
\end{array}
\right). 
\]

Denote  by $q_p(A)$, where $A$ is a primitive $2p$-th root of unity, 
the following root of unity: 
\[ q_p(A)=\left\{\begin{array}{ll}
A^{-4}, & {\rm if}\: p=5 \;{\rm or }\;p\equiv 0({\rm mod}\: 2); \\
A^{-8}, & {\rm if}\: p\equiv 1({\rm mod}\: 2), p\geq 7.\\
\end{array}\right. \]

Squier proved that, after a suitable rescaling, 
$\beta_{q_p(A)}$ preserves a non-degenerate 
Hermitian form and hence it takes 
values in either $U(3)$ or $U(2,1)$, depending 
on the signature of the invariant form.

Now, whenever $p\geq 5$ is prime  the group $\beta_{q_p(A)}(PB_3)$ 
is neither finite nor abelian, so that it is dense in 
$SU(2)\subset SU(3)$ and hence $\beta_{q_p(A)}(PB_4)$ is dense in 
$SU(3)$.

\begin{lemma}
If  the signature of the Hermitian invariant form associated to $\beta_q$ 
is $(2,1)$ or $(1,2)$, then the image of $\beta_q$ in $SU(2,1)$ is dense. 
\end{lemma}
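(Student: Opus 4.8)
The goal is to show that whenever the Squier form for $\beta_q$ has signature $(2,1)$ (equivalently $(1,2)$, which is handled by conjugation), the image $\beta_q(B_4)$ is dense in $SU(2,1)$. I would first observe that $SU(2,1)$ is a rank-one simple Lie group, so by a standard dichotomy a subgroup is either discrete or its closure is everything. Thus it suffices to rule out discreteness of $\beta_q(B_4)$ — or, more precisely, to produce enough elements to force the closure to be all of $SU(2,1)$. The natural route is to exhibit an element of $\beta_q(B_4)$ that is \emph{not} discrete-compatible, for instance an element with an eigenvalue that is not a root of unity (a ``loxodromic'' or at least non-elliptic element), together with a second element not commuting with it; then the closed subgroup they generate is a noncompact, nonabelian closed subgroup of $SU(2,1)$, and one checks that the only such subgroups are $SU(2,1)$ itself and (conjugates of) $SU(1,1)\cong SL(2,\R)$ up to compact factors.

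\textbf{Key steps, in order.} First I would record the structure of closed subgroups of $SU(2,1)$: up to conjugacy the proper closed connected subgroups with noncompact semisimple part are contained in the stabilizer of a point in the boundary (parabolic), or in a copy of $S(U(1,1)\times U(1))$, or in $SO(2,1)$. Second, I would use that $\beta_q(PB_3)$ is already dense in a copy of $SU(2)\subset SU(2,1)$: this is the positive-definite $SU(2)$ sitting inside, coming from the two-dimensional subspace where the form is definite. Hence the closure of $\beta_q(B_4)$ contains this $SU(2)$. Third, I would find an element $\gamma\in\beta_q(B_4)$ — concretely a power of $\beta_q(g_1g_2g_3)$ or of a suitable full-twist-type element — whose action is not elliptic, i.e. it does not lie in a compact subgroup; the point is that the eigenvalues of such an element are genuinely of infinite order since $q$ is a nontrivial root of unity and the entries involve $q$ in a way that produces non-cyclotomic eigenvalues, or else one argues indirectly that if every element were elliptic the group would be bounded, contradicting that $B_4$ surjects onto an infinite group here. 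Fourth, combining: the closure $\overline{\beta_q(B_4)}$ contains a full $SU(2)$ \emph{and} a non-elliptic element, so it is noncompact; a noncompact closed subgroup of $SU(2,1)$ containing a maximal compact of a Levi of a parabolic but not contained in any parabolic (it contains an honest $SU(2)$, not just a torus) and not contained in $SO(2,1)$ (again because it contains an $SU(2)$, whereas $SO(2,1)$ has maximal compact $SO(2)$) must be all of $SU(2,1)$. I would also invoke Proposition \ref{inequivalent}-style irreducibility, or directly the irreducibility of the Burau representation at these roots of unity, to guarantee that $\beta_q(B_4)$ acts irreducibly, which rules out reducible (hence parabolic) closures.

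\textbf{Main obstacle.} The delicate point is the third step: producing a genuinely non-elliptic (noncompact) element of $\beta_q(B_4)$, i.e. ensuring the closure is not a compact group. For definite or near-definite signature this can fail, but here the signature is exactly $(2,1)$, so one must actually locate a hyperbolic element. I would handle this by an explicit eigenvalue computation for a convenient word — e.g. showing that $\beta_q(g_1 g_2^{-1})$ or a small power of the center $\beta_q(\Delta^2)$ has an eigenvalue off the unit circle for the relevant root of unity $q=q_p(A)$ with $p\ge 5$ — or, failing an explicit computation, by the following soft argument: if $\beta_q(B_4)$ were contained in a compact subgroup it would be conjugate into $U(3)$, contradicting the fact (already used in the excerpt for $B_3$, and persisting for $B_4$ since $B_3$ embeds) that the invariant form has a negative direction, so the group genuinely sits in $U(2,1)\setminus$ (conjugates of $U(3)$); an infinite subgroup of $SU(2,1)$ not contained in any compact subgroup and acting irreducibly, whose closure already contains an $SU(2)$, must be dense. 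This packaging reduces the whole lemma to the representation-theoretic facts already established plus the classification of closed subgroups of $SU(2,1)$, which is classical.
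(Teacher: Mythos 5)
Your proposal takes a genuinely different route from the paper: the paper simply cites the Deligne--Mostow theory of hypergeometric monodromy, or alternatively Freedman--Larsen--Wang as reproved and extended by Kuperberg, to get Zariski density of $\beta_q(B_4)$ in $SU(2,1)$, and then (elsewhere in the section) upgrades Zariski density to topological density via the dichotomy for Zariski dense subgroups of simple Lie groups. You instead attempt a self-contained argument via the classification of closed subgroups of $SU(2,1)$. That is an interesting strategy, but as written it has two real gaps.

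First, your opening sentence --- ``$SU(2,1)$ is a rank-one simple Lie group, so by a standard dichotomy a subgroup is either discrete or its closure is everything'' --- is false for general subgroups. A cyclic group generated by an elliptic element has closure a torus; a subgroup of the maximal compact $S(U(2)\times U(1))$ can have closure equal to that compact. The correct dichotomy (discrete closure or full closure) holds only for subgroups that are already known to be \emph{Zariski} dense, which is exactly the input the paper imports from Freedman--Larsen--Wang/Kuperberg. You cannot invoke the dichotomy and bypass the Zariski density at the same time; this is circular.

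Second, the claimed copy of $SU(2)$ in the closure is not justified. The form restricted to the $2$-dimensional $PB_3$-invariant subspace of the reduced Burau module for $B_4$ need not be definite: with overall signature $(2,1)$, the restriction could have signature $(2,0)$ (giving an $SU(2)$) or $(1,1)$ (giving an $SU(1,1)\cong SL(2,\mathbb R)$), depending on the root of unity, and you offer no computation distinguishing the two cases. If the restriction has signature $(1,1)$, your containment argument breaks down since $SU(1,1)$ sits inside the proper subgroup $S(U(1,1)\times U(1))$ that you are trying to exclude. The noncompactness step (your ``soft argument'') can in fact be made to work --- irreducibility of the Burau representation plus Schur's lemma shows the image cannot preserve both the indefinite form and any positive-definite form, so it is not conjugate into $U(3)$ --- but this needs to be said explicitly and does not repair the other two gaps. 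In short: the classification-of-subgroups approach is salvageable, but it ultimately still needs an input equivalent to Zariski density, which is precisely what the paper's citations supply in one line.
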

\begin{proof}
There are several proofs of this statement. It is a consequence of the 
results of Deligne and Mostow concerning hyper-geometric integrals 
(see \cite{Mo}) and the identification of the Burau representation 
as such one.  
Alternatively, we might use a result of Freedman, Larsen and Wang 
(see \cite{FLW}) subsequently reproved and extended by Kuperberg in 
(\cite{Ku}, Thm.1) saying that the Burau representation of $B_4$  
at a $2p$-th root of unity is Zariski dense in the group $SU(2,1)$. 
\end{proof}

We will relate now the quantum representation $\rho_{p,A}$ 
and the Burau representation $\beta_{q_p(A)}$. 
Specifically we embed $\Sigma_{0,5}$ into $\Sigma_g$ by means of curves 
$c_1,c_2,c_3,c_4,c_5$  as in the figure below. 

\begin{center}
\includegraphics[scale=0.4]{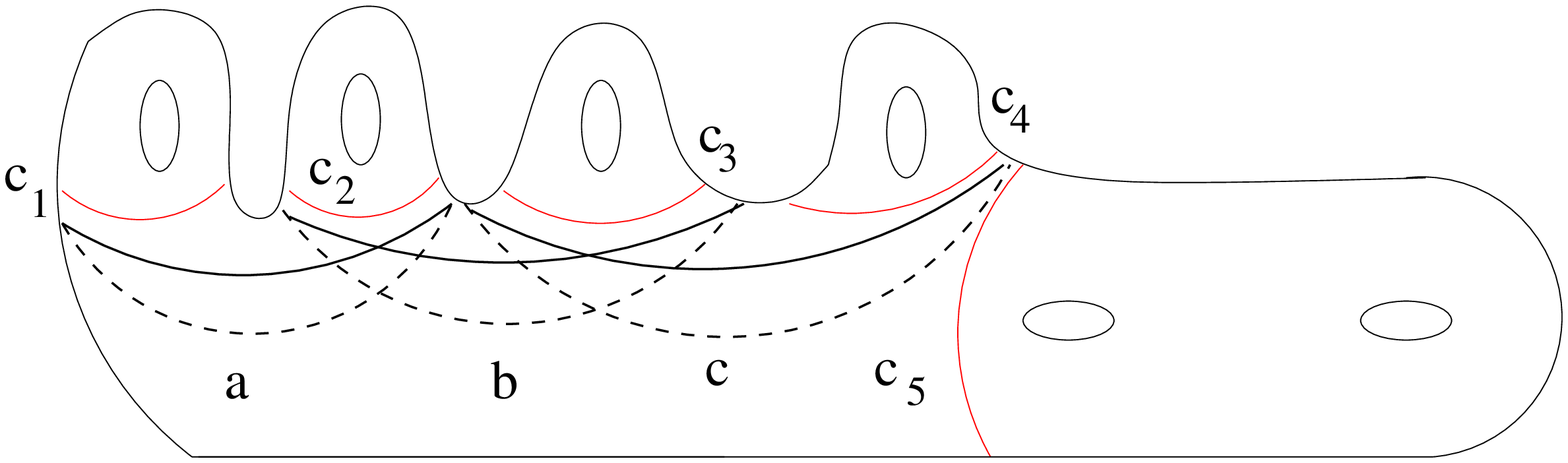}
\end{center}

\vspace{0.2cm}\noindent
Then the curves $a$, $b$ and $c$ 
which are surrounding two of the holes of $\Sigma_{0,5}$
The pure braid group $PB_4$ embeds into $M_{0,5}$ using a 
non-canonical splitting of the surjection $M_{0,5}\to PB_4$.  
Furthermore, $M_{0,5}$ embeds into $M_g$ when $g\geq 5$, by using the 
homomorphism induced by the inclusion of 
$\Sigma_{0,5}$ into $\Sigma_g$ as in the figure. 
When $g\in\{3,4\}$ we shall use other embeddings, similar to 
those used in \cite{FK11} for the proof that the image 
of the quantum representation contains free 
non-abelian subgroups.

\begin{lemma}\label{contain}
Let $p\geq 5$. The restriction of the quantum representation $\rho_p$ at 
$PB_4\subset M_{0,5}$ has an invariant 3-dimensional 
subspace such that the corresponding  sub-representation 
is equivalent to the Burau representation 
$\beta_{q_p(A)}$. 
\end{lemma}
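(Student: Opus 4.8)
The plan is to make explicit the relationship between the $5$-punctured sphere mapping class group and the TQFT, and to identify the relevant sub-representation combinatorially using the basis of conformal blocks. First I would recall the structure of $\mathcal V_p(\Sigma_{0,5})$: conformal blocks for the $5$-holed sphere with boundary colors $i_1,\dots,i_5$ decompose along a pants decomposition, and $PB_4 \subset M_{0,5}$ acts by the composition of the braiding and fusion ($R$- and $F$-) matrices of the TQFT. The key observation is that if we color four of the boundary circles of $\Sigma_{0,5}$ by the fundamental color $1$ (i.e. the color corresponding to the standard $2$-dimensional representation of $SU(2)_q$) and the fifth by a suitable color determined by $p$, the space of conformal blocks with these fixed boundary data has dimension $3$ (there are exactly three admissible internal labelings in a linear pants decomposition of the $4$-punctured sphere obtained by capping), and the $PB_4$-action on it is, up to rescaling, the reduced Burau representation at the parameter dictated by the $R$-matrix eigenvalues.

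Concretely, the second step would be to compute the braiding eigenvalues: a positive half-twist on two strands colored $1$ acts on the two fusion channels (trivial and adjoint) by scalars whose ratio is a fixed power of $A$, and matching this ratio against the eigenvalues $-q$ and $1$ of $\beta_q(g_1)$ pins down $q = q_p(A)$ — this is exactly the reason for the case distinction in the definition of $q_p(A)$ (the normalization differs between $p=5$, even $p$, and odd $p\ge 7$, reflecting the different conventions relating $A$ to the loop parameter in $\mathcal V_p$ versus the Temperley–Lieb parameter). Once the eigenvalue data agree, one invokes the fact that the reduced Burau representation of $B_n$ is characterized (for these small ranks, and up to equivalence) by the fact that the generators act as pseudo-reflections with prescribed eigenvalues and braid relations, so the $3$-dimensional TQFT sub-representation of $PB_4$ must coincide with $\beta_{q_p(A)}$; alternatively, one identifies $\mathcal V_p$ restricted to genus-zero surfaces with the Temperley–Lieb–Jones representation and then uses the classical identification (due to Jones) of the Temperley–Lieb representation of $B_4$ in this sector with the reduced Burau representation.

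The third step is to check that this sub-representation of $PB_4 \subset M_{0,5}$ actually appears inside $\rho_p$ on $\Sigma_g$ for all $g\ge 3$. For $g\ge 5$ this follows from the embedding $\Sigma_{0,5}\hookrightarrow \Sigma_g$ shown in the figure, the functoriality of the TQFT (gluing axiom), and the fact that a subspace on which $PB_4$ acts as claimed survives inside $\mathcal V_p(\Sigma_g)$ as a summand indexed by one admissible labeling of the complementary curves. For $g\in\{3,4\}$ one replaces this with the alternative embeddings already used in \cite{FK11}: there $\Sigma_{0,5}$ is immersed so that its boundary curves map to a suitable configuration of simple closed curves on $\Sigma_g$, and the same fusion-rule bookkeeping exhibits the $3$-dimensional Burau block. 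The main obstacle I expect is the bookkeeping in this last step: one must verify that for the chosen boundary colors the relevant multiplicity space is genuinely $3$-dimensional and nonzero inside $\mathcal V_p(\Sigma_g)$ (this is a nonemptiness-of-fusion condition that needs $p\ge 5$), and that the braiding really restricts to this block without mixing it with other colorings — i.e. that the curves $a,b,c$ lie in the image of the embedded $PB_4$ and their Dehn twists act by the Burau matrices. Matching the normalization constant in $q_p(A)$ correctly across the three cases is the other delicate point, but it is a finite computation with the $R$-matrix of $\mathcal V_p$.
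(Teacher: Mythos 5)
The broad strategy you outline — locate a $3$-dimensional space of conformal blocks on $\Sigma_{0,5}$ inside $\mathcal V_p(\Sigma_g)$, show $PB_4$ preserves it, compute the braiding eigenvalues and match them against $\beta_{q_p(A)}$ — is the same as the paper's, which reduces to \cite{F}, Prop.~3.2 and specifies the invariant subspace as the conformal blocks of $\Sigma_{0,5}$ with boundary labels $(2,2,2,2,2)$ when $p=5$ and $(4,2,2,2,2)$ when $p\geq 7$.

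However, there is a concrete gap in your choice of colors. You propose coloring four of the boundary circles by the ``fundamental color $1$'', i.e.\ the spin-$\tfrac{1}{2}$ label of $SU(2)_q$. For odd $p$ — which is the case of interest here, since the lemma is applied to prime $p\geq 5$ — the TQFT $\mathcal V_p$ is the $SO(3)$-theory of \cite{BHMV}, and its label set consists only of the \emph{even} integers $\{0,2,4,\dots,p-3\}$ (this is stated explicitly later in the paper: ``the set of colors is $\mathcal C=\{0,2,4,\ldots,2r-2\}$''). The color $1$ simply does not exist in this theory, so the conformal-block space you describe is empty. The correct choice is the smallest nontrivial even color, $2$, on the four strands, with the fifth boundary carrying $2$ or $4$ depending on $p$; these are exactly the labels in the paper's proof, and the doubling of color from $1$ to $2$ is also precisely why the exponent in $q_p(A)$ doubles from $A^{-4}$ (even $p$) to $A^{-8}$ (odd $p\geq 7$), not merely a ``normalization convention''.

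Two secondary issues: (i) you invoke ``the $4$-punctured sphere obtained by capping'' to count the $3$ admissible internal labelings, but capping the fifth boundary by color $0$ would reduce $M_{0,5}$ to $M_{0,4}$ and, with four boundaries all carrying the same fundamental color, yields only two fusion channels, not three — the fifth boundary must carry a specific \emph{nonzero} color and must not be capped, since $PB_4$ lives in $M_{0,5}$, not $M_{0,4}$; (ii) the claim that the reduced Burau representation is ``characterized by the fact that the generators act as pseudo-reflections with prescribed eigenvalues'' is stated without justification and is not immediate — the cleaner route, and the one the paper actually takes via \cite{F}, is the direct Temperley--Lieb/Jones identification you mention as an alternative, so you should lead with that rather than with an eigenvalue-characterization argument you have not verified.
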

\begin{proof}
For even $p$ and $PB_3$ instead of $PB_4$ this is the content 
of \cite{F}, Prop. 3.2. The odd case is similar. 
The invariant subspace is the space of conformal blocks 
associated to the surface $\Sigma_{0,5}$ with boundary labels  
$(2,2,2,2,2)$, when $p=5$ and $(4,2,2,2,2)$, when $p\geq 7$ 
respectively.  The eigenvalues of the half-twist  
can be computed as in \cite{F}. 
\end{proof}

Thus the image $\rho_p(PB_4)$ of the quantum representation 
projects onto the image of the Burau representation 
$\beta_{q_p(A)}(PB_4)$.

\vspace{0.2cm}\noindent
{\em End of the proof of Proposition \ref{weakZD}.} If $p$ is as above, then Proposition \ref{LW} 
shows that $\rho_{p,A_p}(M_g)$ is topologically dense in $SU(N(p,g))$ 
which is a compact Lie group. In particular, the image is 
Zariski dense in $SU(N(p,g))$.  The Zariski density is preserved 
by a Galois conjugacy and thus, $\rho_{p,A}(M_g)$ is Zariski dense 
in $SU(N_+(p,g,A), N_-(p,g,A))$ 
for every $A$.  Furthermore,  a Zariski dense subgroup of a reductive  
almost simple Lie group (in particular, of $SU(N_+(p,g,A), N_-(p,g,A))$) 
not contained in the center is either dense or discrete. 
Now, recall that $\rho_{p,A}(M_g)$ contains $\beta_{q_p}(PB_4)$. 
The latter group is topologically dense in $SU(3)$ and respectively 
$SU(2,1)$. Therefore  $\rho_{p,A}(M_g)$ is in-discrete. Since the 
image group $\rho_{p,A}(M_g)$ is not contained within the 
center of $SU(N_+(p,g,A), N_-(p,g,A))$, it follows that 
it should be topologically dense in $SU(N_+(p,g,A), N_-(p,g,A))$. 
This proves the claim.

\section{Proof of Proposition \ref{inequivalent}}
Assume that the representations $\ro_{p,A}$ and $\ro_{p,B}$ 
were equivalent as representations into the special pseudo-unitary group  
$SU(N_+(p,g,A),N_-(p,g,A))$. 
Then $\rho_{p,A}$ and $\rho_{p,B}$ are equivalent as 
projective representations  
into $PU(N_+(p,g,A),N_-(p,g,A))$ and hence by results of Walter (see 
\cite{Wal})  they are equivalent as representations into  
$U(N_+(p,g,A),N_-(p,g,A))$. 
This means that there exists an isomorphism 
$L:U(N_+(p,g,A),N_-(p,g,A))\to  U(N_+(p,g,B),N_-(p,g,B))$ with the 
property that 
\[ L\circ \ro_{p,A} =\ro_{p,B}\]
According to classical results of Rickart (improving previous 
results by Dieudonn\'e) from \cite{Ri} any automorphism $L$  
of a unitary group $\mathbb U$ over an infinite field could be expressed 
under the form: 
\[ L(x)= \chi(x) V x V^{-1} \]
where $V: \C^{N(p,g)}\to \C^{N(p,g)}$ is a semi-linear isomorphism, which 
will be called the intertwiner. 
Namely, this means that there exists some 
field automorphism $\phi:\C\to \C$ with the properties: 
\[\overline{\phi(a)}=\phi(\overline{a})\]
\[ V(a x+ b y)=\phi(a)V(x)+\phi(b)V(y), {\rm for }\; a,b\in \C, x,y\in \C^{N(p,g)}\]
\[ H(V(x),V(y))=\phi(V(x,y))\] 
where $\overline{a}$ denotes the conjugate of $a$ and 
$H$ is the Hermitian form associated to $A$ (or $B$). 
Furthermore, $\chi:\mathbb U \to U(1)$ is some homomorphism. 
It is well-known that the only automorphisms of $\C$ commuting with the 
complex conjugacy are the identity and the complex conjugacy. 
Thus $V$ is either a linear or an anti-linear isomorphism preserving 
(and respectively conjugating) the Hermitian form $H$.

Suppose therefore that $\ro_{p,A}$ and $\ro_{p,B}$ are equivalent. 
Let $V$ be the intertwiner and $\chi$ the character.

\begin{lemma}
If $g\geq 3$, then one can assume that the character $\chi$ is trivial.  
\end{lemma}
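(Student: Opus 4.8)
The claim is: if $g\geq 3$, then in the relation $L\circ\ro_{p,A}=\ro_{p,B}$ with $L(x)=\chi(x)VxV^{-1}$, we may assume the character $\chi:\mathbb U\to U(1)$ is trivial. The point is that $\chi$, when pulled back via $\ro_{p,A}$, gives a homomorphism $\widetilde{M_g}\to U(1)$, and such homomorphisms are severely constrained because the abelianization of the mapping class group is small.

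**The plan.** First I would observe that $\chi\circ\ro_{p,A}:\widetilde{M_g}\to U(1)$ is a group homomorphism, hence factors through the abelianization $H_1(\widetilde{M_g})$. Since $g\geq 3$, the abelianization of $M_g$ is trivial ($M_g$ is perfect for $g\geq 3$), and $\widetilde{M_g}$ is a central extension of $M_g$ by a finite cyclic group (the image of the center under $\ro_p$ is finite, and more precisely $\widetilde{M_g}$ is built from $H^2(M_g;\Z)\cong\Z$), so $H_1(\widetilde{M_g})$ is finite cyclic. Therefore $\chi\circ\ro_{p,A}$ takes values in a finite cyclic subgroup of $U(1)$, i.e., in roots of unity of bounded order. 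Next, I would replace the intertwiner relation by a twisted one: since $\chi\circ\ro_{p,A}$ is a finite-order character, passing to a finite-index subgroup or absorbing the character appropriately, one arranges that the relevant representations differ by conjugation by $V$ alone. Concretely, define $\ro_{p,A}'=(\chi\circ\ro_{p,A})\cdot\ro_{p,A}$; this is still a projective representation of $M_g$ inducing the same $\rho_{p,A}$, so it has the same linear lift up to the finite central twist, and now $V$ intertwines $\ro_{p,A}'$ with $\ro_{p,B}$ on the nose. The remaining task is to check that this replacement does not affect the conclusion we ultimately want (equivalence forcing $A=\overline{B}$), which it does not because a finite-order scalar twist does not change the projective class nor the Zariski closure.

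**Alternative, cleaner route.** Better still, I would argue directly that $\chi$ itself must be trivial on the image. The image $\ro_{p,A}(\widetilde{M_g})$ contains $\beta_{q_p(A)}(PB_4)$ (Lemma \ref{contain}), which is topologically dense in $SU(3)$ or $SU(2,1)$ — in particular it lies in the derived subgroup, so $\chi$ vanishes on it. Since $M_g$ is perfect for $g\geq 3$, its image under $\rho_{p,A}$ is also perfect, hence contained in $\ker\chi$; the only ambiguity is on the central factor of $\widetilde{M_g}$, whose $\ro_p$-image is finite, and absorbing that finite scalar into $V$ (rescaling $V$ by a root of unity) kills $\chi$ entirely. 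Thus after adjusting $V$ by a scalar we get $L(x)=VxV^{-1}$.

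**Main obstacle.** The step I expect to require the most care is controlling the character on the central direction of $\widetilde{M_g}$: one must verify that the finite scalar coming from $\chi$ on the center can genuinely be absorbed into $V$ without disturbing the semilinearity/Hermitian-form-compatibility conditions $H(V(x),V(y))=\phi(V(x,y))$ — rescaling $V$ by a unit scalar $\lambda$ multiplies the form by $|\lambda|^2=1$, so this is fine, but one must present it correctly. The perfectness input for $M_g$ at $g\geq 3$ is essential and is exactly where the hypothesis $g\geq 3$ enters; for $g\leq 2$ the abelianization is nontrivial and the argument would fail, consistent with the paper's standing assumption.
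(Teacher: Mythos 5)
Your two routes both identify the right structural inputs (perfectness of $M_g$ for $g\geq 3$, finiteness of $H_1(\widetilde{M_g})$), but both stall at the same place: the central generator $c$ of $\widetilde{M_g}$, where the argument actually requires a small number-theoretic step that you do not supply.

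The ``absorption'' step in your cleaner route is the genuine gap. Rescaling the intertwiner $V$ by a scalar $\lambda$ does \emph{not} change $VxV^{-1}$: scalars commute through conjugation, and one checks the same even when $V$ is anti-linear (since $(\lambda V)^{-1}=\bar\lambda^{-1}V^{-1}$ and the two factors cancel after passing $\bar\lambda^{-1}$ through the linear operator $x$ and then through $V$). So the finite scalar $\chi_0(c)$ cannot be absorbed into $V$, and you are left with $\chi_0(c)$ potentially nontrivial. Similarly, your first route replaces $\ro_{p,A}$ by the twisted representation $\chi_0\cdot\ro_{p,A}$; this changes the eigenvalues of Dehn twists by $\chi_0(\widetilde T_\gamma)$, which is precisely the data the rest of the argument in Proposition \ref{inequivalent} feeds on, so ``not changing the projective class'' is not enough to justify the substitution.

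What is needed, and what the paper does, is to show $\chi_0(c)=1$ outright. Two facts are combined: $H_1(\widetilde{M_g})$ is not merely finite cyclic but equal to $\Z/12\Z$ (from the Gervais presentation, or from the universal central extension having index $12$ and trivial $H_1$), giving $\chi_0(c)^{12}=1$; and $\ro_{p,A}(c)=A^{-12}\cdot I$ is a scalar of order $p$ (since $A$ is a primitive $2p$-th root of unity and $\gcd(12,2p)=2$), giving $\chi_0(c)^p=1$. Since $p\geq 5$ is prime, $\gcd(p,12)=1$ forces $\chi_0(c)=1$, and then perfectness of $M_g$ kills $\chi_0$ on the rest. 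Note the primality hypothesis $p\geq 5$ enters exactly here, a dependence your proposal does not surface. If you wish to bypass the arithmetic entirely, the cleanest patch is to use that $\ro_{p}(\widetilde{M_g})\subset S\mathbb U$ (the Dunfield--Wong result quoted in the introduction) together with the fact that the special pseudo-unitary group is perfect as an abstract group, so that any character of $\mathbb U$ vanishes on the entire image; but as written your proposal does not make this argument either.
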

\begin{proof}
Let $\chi_0:\widetilde{M_g}\to U(1)$ denote the homomorphism defined by 
$\chi_0(x)=\chi(\ro_{p,A}(x))$. 
Since $\ro_{p,B}(x)=\chi(\ro_{p,A}(x)) V\circ \ro_{p,A}(x)\circ V^{-1}$,
we obtain that $\chi_0$ is a character on $\widetilde{M_g}$. 
The presentation given by Gervais (see \cite{Ger}, Thm.B) 
shows that lantern relations lift to $\widetilde{M_g}$. Furthermore, 
$\widetilde{M_g}$ is generated by lifts of pairwise conjugate 
Dehn twists along non-separating curves and one central generator $c$. 
We can express $c^{12}$ using the chain relation on 2-holed subtori with 
non-separating boundary curves. The lantern relation 
implies then that $H_1(\widetilde{M_g})=\Z/12\Z$. Alternatively, we can 
use the fact that the universal central extension  
$\widetilde{M_g}^{\rm univ}$ is a normal subgroup of index 12 
of $\widetilde{M_g}$ and $H_1(\widetilde{M_g}^{\rm univ})=0$.  
Therefore $\chi_0(x)^{12}=1$, for any $x\in \widetilde{M_g}$. 
On the other hand, it is 
known that $\ro_{p,A}(c)$ is the scalar matrix $A^{-12}$ (see e.g.
\cite{BHMV,MR,FP}) and hence $\chi_0(c)^p=1$.  Since $p\geq 5$ is 
prime, it follows that $\chi_0(c)=1$ and hence $\chi_0$ factors through 
$M_g$. Now, $M_g$ is perfect for $g\geq 3$ and thus the character 
$\chi_0$ is trivial. In particular, we can take $\chi$ to be trivial. 
\end{proof}

For each simple loop $\gamma$ on the surface,  
there is defined in (\cite{MR}, 4.3) 
a canonical lift $\widetilde{T}_{\gamma}\in \widetilde{M_g}$ 
of the (right) Dehn twist $T_{\gamma}$ along $\gamma$.

Consider the collection of eigenvalues of Dehn twists along 
with their multiplicities:  
\[ E(A, \widetilde{T}_{\gamma})=\{\lambda \; ; \;\;  {\rm there \; exists \; } x\neq 0 \; {\rm such \; that} \;  
\ro_{p,A}(\widetilde{T}_{\gamma})(x)=\lambda x\} \]

\begin{lemma}
If $\ro_{p,A}$ and $\ro_{p,B}$ are equivalent, then 
either $E(A,\widetilde{T}_{\gamma})= E(B, \widetilde{T}_{\gamma})$, for every $\gamma$ 
or else $E(A, \widetilde{T}_{\gamma})=\overline{E(B, \widetilde{T}_{\gamma})}$, 
for every $\gamma$. 
\end{lemma}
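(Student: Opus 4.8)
The plan is to leverage the fact, established in the preceding lemma, that after composing with a suitable automorphism we may assume the intertwiner $V$ is either $\C$-linear or $\C$-antilinear and that $\chi$ is trivial, so that $V \circ \ro_{p,A}(x) = \ro_{p,B}(x) \circ V$ for all $x \in \widetilde{M_g}$. Applying this identity to the canonical lift $\widetilde{T}_\gamma$ of a Dehn twist along a simple closed curve $\gamma$, we get $V \circ \ro_{p,A}(\widetilde{T}_\gamma) = \ro_{p,B}(\widetilde{T}_\gamma) \circ V$. Thus $\ro_{p,B}(\widetilde{T}_\gamma)$ is conjugate, via the bijective semilinear map $V$, to $\ro_{p,A}(\widetilde{T}_\gamma)$. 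The key observation is that semilinear conjugation carries eigenvectors to eigenvectors but twists eigenvalues by the field automorphism $\phi$ underlying $V$: if $\ro_{p,A}(\widetilde{T}_\gamma)(x) = \lambda x$ with $x \neq 0$, then $\ro_{p,B}(\widetilde{T}_\gamma)(V x) = V(\lambda x) = \phi(\lambda)\, V x$, and $V x \neq 0$ since $V$ is an isomorphism. Hence $\phi\big(E(A,\widetilde{T}_\gamma)\big) = E(B,\widetilde{T}_\gamma)$ as sets, and in fact with multiplicities since $V$ induces a linear (or antilinear) isomorphism of eigenspaces.

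The next step is to pin down $\phi$. As recorded in the discussion just before the previous lemma, Rickart's theorem forces $\phi$ to commute with complex conjugation, and the only such automorphisms of $\C$ are the identity and complex conjugation itself. If $\phi = \mathrm{id}$ (the case $V$ linear), then $E(A,\widetilde{T}_\gamma) = E(B,\widetilde{T}_\gamma)$ for every $\gamma$; if $\phi = $ complex conjugation (the case $V$ antilinear), then $E(A,\widetilde{T}_\gamma) = \overline{E(B,\widetilde{T}_\gamma)}$ for every $\gamma$. The point that makes the dichotomy uniform over all $\gamma$ — rather than allowing $\phi$ to vary with the curve — is that there is a single intertwiner $V$, hence a single $\phi$, valid simultaneously for all elements of $\widetilde{M_g}$. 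This gives exactly the two alternatives in the statement.

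The one place requiring a little care is the passage from "sets of eigenvalues" to "sets of eigenvalues with multiplicities," i.e. making sure $E(A,\widetilde{T}_\gamma)$ is genuinely being treated as a multiset and that $V$ respects this. Since $V$ restricts to a bijection between the $\lambda$-eigenspace of $\ro_{p,A}(\widetilde{T}_\gamma)$ and the $\phi(\lambda)$-eigenspace of $\ro_{p,B}(\widetilde{T}_\gamma)$ — it is additive and sends the first space into the second, and its semilinear inverse sends the second back into the first — these eigenspaces have equal complex dimension, so multiplicities match. I expect this bookkeeping to be the only real obstacle, and it is minor; the conceptual content is entirely the semilinear-conjugation computation together with the rigidity of automorphisms of $\C$ commuting with conjugation, both already in hand from the earlier parts of the proof. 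I would also remark in passing that the canonical nature of the lift $\widetilde{T}_\gamma$ matters only insofar as it is a well-defined element of $\widetilde{M_g}$ on which the intertwining relation can be evaluated; any fixed lift would do for this lemma.
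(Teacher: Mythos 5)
Your proof is correct and follows essentially the same argument as the paper: apply the intertwining identity $V\circ\ro_{p,A}(\widetilde{T}_\gamma)=\ro_{p,B}(\widetilde{T}_\gamma)\circ V$ to an eigenvector, observe the eigenvalue is twisted by the underlying field automorphism $\phi$, and note that $\phi$ is either the identity or complex conjugation. You spell out more explicitly than the paper does that the dichotomy is uniform in $\gamma$ (one $V$, one $\phi$) and that multiplicities are preserved, but these are just worthwhile elaborations of the same proof.
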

\begin{proof}
The linear representations $\ro_{p,A}$ and $\ro_{p,B}$ are 
either linearly or anti-linearly equivalent by means of some 
intertwiner isomorphism $V$. Now, if  
\[ \ro_{p,A}(\widetilde{T}_{\gamma})(x)= \lambda x \]
then we have: 
\[ \ro_{p,B}(\widetilde{T}_{\gamma})V(x)= V\ro_{p,A}(\widetilde{T}_{\gamma})V^{-1}(V(x))= \phi(\lambda) V(x) \]
Thus the set of eigenvalues of $\ro_{p,A}(\widetilde{T}_{\gamma})$ and 
$\ro_{p,B}(\widetilde{T}_{\gamma})$ should correspond to each 
another by means of $V$.
\end{proof}

Let $C=\{\gamma_1,\gamma_2,\ldots,\gamma_{n}\}$, $n\leq 3g-3$, be 
some set of disjoint simple closed curves 
on the closed surface of genus $g$. 
Then the  $n$ lifts of Dehn twists $\widetilde{T}_{\gamma_i}$, $\gamma_i\in C$ 
pairwise commute and hence one could diagonalize them 
simultaneously. Let $W_{p,g}$ denote the space of conformal 
blocks associated by the TQFT $\mathcal V_p$ to the closed 
orientable surface of genus $g$.  
We set $\Lambda=(\lambda_1,\lambda_2,\ldots,\lambda_{n})$ 
and define: 
\[ W (\Lambda,C, A)= \{x\in W_{p,g} \; ; \: \ro_{p,A}(\widetilde{T}_{\gamma_i})(x)=\lambda_i x, \; 
i\in \{1,2,\ldots,n\} \}\subset W_{p,g}\]

Let now $\Sigma_{g',n}\hookrightarrow \Sigma_g$ be an essential 
embedding of the surface $\Sigma_{g',n}$ of genus $g'$ 
with $n$ boundary components in $\Sigma_g$, i.e. 
such that the homomorphism induced 
at the level of fundamental groups is injective. 
We also assume that $\Sigma_{g',n}$ has a pants decomposition.  
Let $C$ be a maximal system 
of non null-homotopic and  pairwise non-homotopic  simple closed 
curves in $\Sigma_g\setminus \Sigma_{g',n}$.  
Then $C$ contains  the set $C_0$ of boundary circles of $\Sigma_{g',n}$. 
We associate to each circle $c_i$ from $C_0$ some  arbitrary $\lambda_i$. 
Further, to each circle  $c_s$ in $C-C_0$ we associate  $\lambda_s=1$.   
We say that $\Lambda$ comes from a coloring of $C_0$ if 
there is some set of colors  $(i_1,\ldots,i_n)$ such that, for all  
 \[ \lambda_j = \left\{\begin{array}{ll}
A^{i_j(i_j+2)}, &  {\rm for }\; {\rm odd}\; p; \\
(-1)^{i_j} A^{i_j(i_j+2)}, & {\rm for }\; {\rm even}\; p.\\
\end{array}\right.  \] 

\begin{lemma}
The vector space  $W(\Lambda, C, A)$ is non-zero only if 
$\Lambda$ comes from a coloring of $C_0$. In this case 
$W(\Lambda, C, A)$ can be identified with the 
space of conformal blocks associated to the subsurface $\Sigma_{g',n}$ 
and the coloring $(i_1,\ldots,i_n)$ of the boundary components. 
\end{lemma}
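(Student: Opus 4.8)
The plan is to use the standard description of how Dehn twists act on the space of conformal blocks when one works in the basis adapted to a pants decomposition. Fix the pants decomposition of $\Sigma_{g',n}$ and extend the curve system $C_0$ of boundary circles to the full maximal curve system $C$ in $\Sigma_g$, so that $C$ restricted to $\Sigma_g\setminus\Sigma_{g',n}$ together with a pants decomposition of $\Sigma_{g',n}$ refines to a pants decomposition $\mathcal P$ of all of $\Sigma_g$. The TQFT $\mathcal V_p$ assigns to $\mathcal P$ a preferred orthogonal basis of $W_{p,g}$ indexed by admissible colorings of the curves of $\mathcal P$ by labels in the set $\{0,1,\ldots,p-2\}$ (for odd $p$), compatible with the fusion rules at each pair of pants. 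In this basis the Dehn twist $\widetilde T_{\gamma_i}$ along a curve $\gamma_i\in\mathcal P$ acts diagonally, multiplying the basis vector of color $c$ by the twist eigenvalue $\mu_{c(\gamma_i)}$, where $\mu_k$ is the scalar by which $T$ acts on the label-$k$ sector; by the explicit formula for the framing anomaly (see \cite{BHMV}) one has $\mu_k = A^{k(k+2)}$ for odd $p$ and $\mu_k=(-1)^k A^{k(k+2)}$ for even $p$, which is exactly the exponent appearing in the statement.

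First I would verify that the eigenvalue assignment is determined by the coloring and nothing else: a basis vector $v_c$ lies in $W(\Lambda,C,A)$ iff $\mu_{c(\gamma_i)}=\lambda_i$ for every $i$. For the circles $c_s\in C\setminus C_0$ that lie inside $\Sigma_g\setminus\Sigma_{g',n}$ we have imposed $\lambda_s=1$; one must check this forces $c(c_s)=0$. This uses that $\mu_k=1$ with $0\le k\le p-2$ and $p$ prime forces $k(k+2)\equiv 0$, hence $k=0$ (here one uses $p\ge 5$ prime so that neither $k$ nor $k+2$ can be a nonzero multiple of $p$ in the admissible range), together with the sign bookkeeping for even $p$. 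Since $C$ is a \emph{maximal} curve system in the complement, the curves of $C\setminus C_0$ cut that complement into pairs of pants, and forcing all their colors to be $0$ kills all the "external" degrees of freedom. Hence a nonzero $W(\Lambda,C,A)$ requires that the $\lambda_i$ on $C_0$ be simultaneously realizable as twist eigenvalues, i.e. $\Lambda$ comes from a coloring $(i_1,\ldots,i_n)$ of $C_0$; this is the "only if" half.

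Next, assuming $\Lambda$ does come from a coloring, I would identify $W(\Lambda,C,A)$ with the conformal block space of $\Sigma_{g',n}$ with the prescribed boundary colors. The gluing axiom of the TQFT gives a canonical decomposition
\[
W_{p,g}\;\cong\;\bigoplus_{\text{colorings of }C}\; W(\Sigma_g\setminus\Sigma_{g',n};\text{colors})\otimes W(\Sigma_{g',n};\text{colors on }C_0),
\]
and restricting to the summand where all curves of $C\setminus C_0$ carry color $0$ and the curves of $C_0$ carry $(i_1,\ldots,i_n)$ leaves exactly $W(\Sigma_{g',n};(i_1,\ldots,i_n))$ tensored with the (one-dimensional, since fully pants-decomposed with trivial colors) block space of the complement. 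This is the claimed identification. The main obstacle I anticipate is purely bookkeeping: making the framing/anomaly normalization of the twist eigenvalues match the convention of \cite{BHMV} exactly (including the annoying sign for even $p$ and the precise meaning of the canonical lift $\widetilde T_\gamma$ from \cite{MR}), and checking that "maximal system in the complement" really does refine to a pants decomposition with no residual freedom — i.e. that no component of $\Sigma_g\setminus\Sigma_{g',n}$ is left un-decomposed. Neither of these is conceptually hard, but they are where a careless argument would slip.
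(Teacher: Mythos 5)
Your proposal takes essentially the same route as the paper: both identify the basis of conformal blocks adapted to a pants decomposition (the colored trivalent graph basis of \cite{BHMV}, 4.11), observe that the canonical lifts of Dehn twists along the decomposing curves act diagonally with eigenvalue given by the color-dependent twist formula, and then read off the identification of the joint eigenspace $W(\Lambda,C,A)$ with the block space of the colored subsurface via the gluing axiom. You spell out a couple of steps the paper leaves implicit, such as the check that $\lambda_s=1$ on a curve of $C\setminus C_0$ forces its color to be $0$ (which does need $p$ prime), and this is a welcome addition; the only slip is cosmetic — for odd $p$ the $SO(3)$ label set is $\{0,2,4,\ldots,p-3\}$ rather than $\{0,1,\ldots,p-2\}$, but this does not affect the argument.
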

\begin{proof}
It is well-known that the eigenvalues $\lambda_{i}$ of Dehn twists 
in the basis given by colored trivalent graphs from (\cite{BHMV}, 4.11) 
are given by the formula  from above, in terms 
of colors (see also \cite{BHMV}, 5.8).  
Now  the space of conformal blocks associated to a subsurface 
splits  as a direct sum of one dimensional eigenspaces 
$W(\Lambda,C\cup C',A)$ associated to 
all possible colorings of some maximal system $C'$  
of non null-homotopic and  pairwise non-homotopic  simple closed 
curves on the subsurface 
$\Sigma_{g',n}$.
\end{proof}

\begin{lemma}
Any intertwiner $V$ 
induces an isomorphism  
\[ V: W(\Lambda,C, A) \to  W(\Lambda,C,B),\]
and hence an isomorphism between the subspaces of 
conformal blocks associated to any 
essential subsurface with colored boundary. 
\end{lemma}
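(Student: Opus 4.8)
The plan is to show that the intertwiner $V$, which we have already established is either a linear or an anti-linear isomorphism of $W_{p,g}$ satisfying $V\circ\ro_{p,A}\circ V^{-1}=\ro_{p,B}$ (with trivial character $\chi$), automatically respects the joint eigenspace decomposition of any commuting family of lifted Dehn twists. The key observation is simply that $V$ conjugates the operator $\ro_{p,A}(\widetilde T_{\gamma_i})$ to $\ro_{p,B}(\widetilde T_{\gamma_i})$ for each $i$, and that the field automorphism $\phi$ attached to $V$ is either the identity or complex conjugation. So the first step is: for $x\in W(\Lambda,C,A)$, compute $\ro_{p,B}(\widetilde T_{\gamma_i})V(x)=V\ro_{p,A}(\widetilde T_{\gamma_i})V^{-1}V(x)=V(\lambda_i x)=\phi(\lambda_i)V(x)$, exactly as in the eigenvalue lemma above. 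Hence $V$ carries $W(\Lambda,C,A)$ into $W(\phi(\Lambda),C,B)$, where $\phi(\Lambda)=(\phi(\lambda_1),\ldots,\phi(\lambda_n))$.

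Now I must account for the two cases. If $\phi=\mathrm{id}$, then $\phi(\Lambda)=\Lambda$ and the claimed statement $V\colon W(\Lambda,C,A)\to W(\Lambda,C,B)$ holds directly, and since $V$ is a bijection of $W_{p,g}$ that maps each summand into the corresponding summand, it restricts to an isomorphism on each. If $\phi$ is complex conjugation, then $\phi(\Lambda)=\overline\Lambda$. Here I would invoke the already-recorded identity $\ro_{p,\overline A}=\overline{\ro_{p,A}}$, together with the previous lemma describing which $\Lambda$ can support a nonzero space — namely those coming from a coloring, via $\lambda_j=A^{i_j(i_j+2)}$ (odd $p$) or $(-1)^{i_j}A^{i_j(i_j+2)}$ (even $p$). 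Conjugating such a $\lambda_j$ replaces $A$ by $\overline A$, so $\overline\Lambda$ is the tuple coming from the same coloring $(i_1,\ldots,i_n)$ but for the root of unity $\overline A$ rather than $A$; and since $W(\overline\Lambda,C,B)$ is the $B$-eigenspace, after relabelling the root of unity we still get that $V$ is an isomorphism onto a joint eigenspace of the same combinatorial type, which by the preceding lemma is the conformal block space of the subsurface with the corresponding boundary coloring. In writing this up I would phrase the conclusion uniformly by saying that, up to the global replacement $A\mapsto\overline A$ (which is harmless since it only relabels which representation we call the target), $V$ induces the asserted isomorphism; this is consistent with the earlier remark that $\ro_{p,A}$ and $\ro_{p,\overline A}$ are equivalent, so the statement as printed in the lemma should be read with that understanding.

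The final step is to pass from the joint eigenspaces to the conformal block spaces of essential subsurfaces. This is purely a matter of quoting the previous lemma: for an essential embedding $\Sigma_{g',n}\hookrightarrow\Sigma_g$ with a completing curve system $C\supset C_0$, the space $W(\Lambda,C,A)$ with $\Lambda$ coming from a coloring $(i_1,\ldots,i_n)$ of $C_0$ (and $\lambda_s=1$ on $C\setminus C_0$) is identified with the conformal block space of $\Sigma_{g',n}$ with that boundary coloring. Since $V$ takes $W(\Lambda,C,A)$ isomorphically onto the analogous space for $B$, it induces the claimed isomorphism between the two subsurface conformal block spaces. I would also note that $V$ respects inclusions and gluings of such subsurfaces because it is a single fixed isomorphism of the ambient space $W_{p,g}$ compatible with all the twist operators simultaneously.

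The main obstacle, and the only genuinely delicate point, is the bookkeeping in the anti-linear case: one has to be careful that conjugating the eigenvalue tuple does not throw it outside the set of colorings, and that the target $W(\overline\Lambda,C,B)$ really is again a subsurface conformal block space rather than something degenerate. This is handled cleanly by the explicit eigenvalue formulas from the previous lemma, which show that complex conjugation acts on the set of admissible $\Lambda$'s simply by $A\mapsto\overline A$ while fixing the underlying coloring. Everything else is a direct transcription of the conjugation identity $\ro_{p,B}(g)=V\ro_{p,A}(g)V^{-1}$ applied to commuting lifted twists, so I expect the proof to be short.
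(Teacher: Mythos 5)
Your argument is correct and takes the same route as the paper, whose proof of this lemma is literally the single sentence that it follows from the preceding two lemmas; you simply fill in the conjugation computation $\ro_{p,B}(\widetilde T_{\gamma_i})V(x)=\phi(\lambda_i)V(x)$ and the appeal to the explicit eigenvalue formulas. Your observation that in the anti-linear case $V$ actually sends $W(\Lambda,C,A)$ to $W(\overline{\Lambda},C,B)$, so the displayed map is literally accurate only when $\phi=\mathrm{id}$, is a fair catch of a small imprecision in the paper's phrasing, but it is harmless since the subsequent lemma treats the linear and anti-linear cases separately.
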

\begin{proof}
This is a consequence of the previous two lemmas.
\end{proof}

\begin{lemma}
Let $A$ and $B$ be primitive $2p$-th roots of unity. 
\begin{enumerate} 
\item If $\ro_{p,A}$ and $\ro_{p,B}$ are linearly equivalent, then $A=B$.  
\item If $\ro_{p,A}$ and $\ro_{p,B}$ are anti-linearly equivalent, then   
$A=\overline{B}$. 
\end{enumerate}
\end{lemma}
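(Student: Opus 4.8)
The plan is to analyze the action of the intertwiner $V$ on the subspaces of conformal blocks, using the preceding lemmas, to pin down which root of unity is being used. The key invariant I would extract is the set of eigenvalues of a single well-chosen Dehn twist, together with the dimensions of the corresponding eigenspaces, since these dimensions are combinatorially determined by the coloring rules of the TQFT $\mathcal V_p$ and depend on $p$ but not on the chosen primitive $2p$-th root $A$, whereas the eigenvalues themselves record $A$ explicitly.

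\textbf{Step 1: Reduce to eigenvalue matching.} By the earlier lemmas, if $\ro_{p,A}$ and $\ro_{p,B}$ are linearly equivalent via an intertwiner $V$ with field automorphism $\phi=\mathrm{id}$, then for every non-separating simple closed curve $\gamma$ one has $E(A,\widetilde T_\gamma)=E(B,\widetilde T_\gamma)$ as sets-with-multiplicities; if they are anti-linearly equivalent, then $\phi$ is complex conjugation and $E(A,\widetilde T_\gamma)=\overline{E(B,\widetilde T_\gamma)}$. Moreover $V$ carries $W(\Lambda,C,A)$ isomorphically onto $W(\Lambda,C,B)$ (linear case) or onto $W(\overline\Lambda,C,B)$ (anti-linear case), so the eigenspace \emph{dimensions} attached to a given color are the same for $A$ and for $B$. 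Thus it suffices to show that the correspondence forced on colors by these constraints is the identity, i.e. that a color $i$ must map to the same color $i$.

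\textbf{Step 2: Exploit the explicit eigenvalue formula.} For odd $p$ the eigenvalue of $\widetilde T_\gamma$ on the color-$i$ eigenspace is $A^{i(i+2)}$ (up to the fixed normalization built into the canonical lift $\widetilde T_\gamma$ of \cite{MR}, which is the same for $A$ and $B$), with $i$ ranging over $0,1,\dots,p-2$. Since $A$ and $B$ are both primitive $2p$-th roots of unity, write $B=A^k$ for some $k$ with $\gcd(k,2p)=1$. The equality of eigenvalue multisets forces a permutation $\sigma$ of the color set with $A^{i(i+2)}=B^{\sigma(i)(\sigma(i)+2)}=A^{k\,\sigma(i)(\sigma(i)+2)}$, hence $i(i+2)\equiv k\,\sigma(i)(\sigma(i)+2)\pmod{2p}$; and equality of multiplicities forces $\sigma$ to preserve the dimension function $i\mapsto \dim W$ of the relevant conformal block space. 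Taking $i=0$ shows $0\equiv k\,\sigma(0)(\sigma(0)+2)$, and running over enough colors (and enough choices of subsurface, so that the dimension data is rich) should force $k\equiv \pm 1\pmod{2p}$, i.e. $B=A$ or $B=\overline A$. In the anti-linear case the same argument with $\overline{E(B,\cdot)}$ in place of $E(B,\cdot)$, i.e. replacing $B$ by $\overline B=A^{-k}$, forces $A=\overline B$. The even-$p$ case is identical after carrying the sign $(-1)^i$ along, which does not affect which power of $A$ is recorded. Finally one must rule out the ``wrong'' pairing in each case: if $V$ is linear we cannot also have $A=\overline B\neq B$, because then the dimension/eigenvalue data would give two genuinely different colorings realizing the same eigenvalue, and one checks this is incompatible unless the surface is too small — here the hypothesis $g\geq 3$ enters, guaranteeing enough colors and enough essential subsurfaces $\Sigma_{g',n}$ to separate the colors.

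\textbf{The main obstacle} will be Step 2's combinatorial core: showing that the quadratic residue pattern $\{i(i+2)\bmod 2p : 0\le i\le p-2\}$ together with the conformal-block dimension function is rigid enough that any $k$ with $\gcd(k,2p)=1$ preserving both must satisfy $k\equiv\pm1$. The eigenvalue condition alone is a statement about the multiset $\{i(i+2)\bmod 2p\}$ being invariant under multiplication by $k$, which is not obviously enough; the extra leverage comes from the multiplicities, which grow with the genus and are \emph{not} invariant under an arbitrary Galois twist of the colors, so the genus-$\geq 3$ hypothesis together with the freedom to choose the essential subsurface $\Sigma_{g',n}$ is what makes the rigidity work. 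Assembling these constraints carefully, and verifying that $g\geq 3$ genuinely suffices (as opposed to needing larger genus), is where the real work lies.
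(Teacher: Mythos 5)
Your plan correctly isolates the two tools the paper uses — the eigenvalue formula $A^{i(i+2)}$ for $\widetilde T_\gamma$ on a colored block, and the fact that the intertwiner $V$ carries eigenspaces $W(\Lambda,C,A)$ to $W(\Lambda,C,B)$ so that conformal-block dimensions (not just eigenvalue multisets) must match. You also correctly note that the eigenvalue multiset alone is a weak invariant and that the dimension data attached to subsurfaces is where the rigidity must come from. However, you leave exactly the decisive step open: you say that the constraint ``should force $k\equiv\pm 1\pmod{2p}$'' and that ``assembling these constraints carefully\ldots is where the real work lies.'' That is a genuine gap, not a detail to be filled mechanically — the multiset invariance of $\{i(i+2)\bmod 2p\}$ under multiplication by $k$ is a delicate number-theoretic statement that the paper deliberately avoids having to prove.

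The paper closes the gap much more cheaply than your plan suggests, and in a way that also removes your final ``rule out the wrong pairing'' step. It shows directly that the color bijection $f$ (your $\sigma$) fixes the small colors: $f(0)=0$ is forced by the eigenvalues, and then $f(2)=2$ (for odd $p$; $f(1)=1$, $f(2)=2$ for even $p$) follows by comparing the dimension of the conformal-block space of a four-holed sphere $\Sigma_{0,4}$ with all boundary components colored $2$ (dimension $3$, with internal colors $\{0,2,4\}$) against the dimension when the boundary color is $f(2)>2$ (strictly larger). This forces $f(2)=2$, hence $A^8=B^8$ (resp.\ $A^3=B^3$ and $A^8=B^8$), and since $A,B$ are primitive $2p$-th roots with $p$ odd this already gives $A=B$ in the linear case — no $\pm$ ambiguity arises, so there is no ``wrong pairing'' to exclude. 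In the anti-linear case the identical computation gives $A^8=\overline B^{\,8}$, hence $A=\overline B$. You should redo Step~2 along these lines: rather than trying to constrain $k$ via the full quadratic-residue pattern and then disambiguate, pin down $f$ on two small colors via the $\Sigma_{0,4}$ dimension count and let the primitivity of $A,B$ finish the job.
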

\begin{proof}
Consider first the case (of the $SO(3)$-TQFT) when 
$p=2r+1$ is  odd. Then the set of colors 
is $\mathcal C=\{0,2,4,\ldots, 2r-2\}$ and the collection of  eigenvalues 
is (see \cite{BHMV}, 5.8) given by 
$E(A, \widetilde{T}_{\gamma})=(A^{i(i+2)})_{i\in \mathcal C}$. 
By assumption there exists a bijection  between the set of colors 
associated to $A$ and $B$, namely $f:\mathcal C\to \mathcal C$ 
with the property that 
$A^{i(i+2)}=B^{f(i)(f(i)+2)}$. 
Moreover, the bijection $f$ between the colors should be compatible with 
the conformal blocks structure.

We will prove by recurrence on $i$  that $f(i)=i$. 
First we have $f(0)=0$. 
If $f(2)\neq 2$, then $f(2) > 2$. The space of conformal blocks 
associated to a 4-holed sphere $\Sigma_{0,4}$ whose boundary 
components are colored by $2$ has dimension $3$, corresponding 
to the colors $\{0,2,4\}$ on a separating curve (if $p\geq 7$). 
The bijection $f$ should send this space into the space of  
conformal blocks associated to a 4-holed sphere whose boundary components 
are colored by $f(2)$. But the separating curve can be given any 
even color in the range $0,2, \ldots, f(2)$. Thus $f(2) >2$ would 
lead to a contradiction. This implies that $f(2)=2$.  
Therefore we have $A^8=B^8$ where both $A$ and $B$ are primitive 
roots of unity of order $2p$, with odd $p$, 
which implies that $A=B$.   
The case $p=5$ is immediate, by direct calculation.

Consider now the case (of the $SU(2)$-TQFT) when 
$p=2r+2$ is  even. Then the set of colors 
is $\mathcal D=\{0,1,2,\ldots, r-1\}$ and the collection of  eigenvalues 
is (see \cite{BHMV}, 5.8) given by 
$E(A, T_{\gamma})=((-1)^iA^{i(i+2)})_{i\in \mathcal D}$. 
By assumption there exists a bijection  between the set of colors 
associated to $A$ and $B$, namely $f:\mathcal D\to \mathcal D$ 
with the property that 
$(-1)^iA^{i(i+2)}=(-1)^{f(i)}B^{f(i)(f(i)+2)}$. 
Moreover, the bijection $f$ between the colors should be compatible with 
the conformal blocks structures.

We first have $f(0)=0$. 
If $f(1)\neq 1$, then $f(1) > 1$. The space of conformal blocks 
associated to a 4-holed sphere $\Sigma_{0,4}$ whose boundary 
components are colored by $1$ has dimension $2$, corresponding 
to the colors $\{0,2\}$ on a separating curve (if $p\geq 4$). 
The space of conformal blocks associated to 
a 4-holed sphere $\Sigma_{0,4}$ whose boundary components are colored 
by $f(1)$ has dimension $f(1)+1$, corresponding 
to the colors $\{0,2,\ldots, \min(2f(1), 2r-2-2f(1))\}$ on a separating curve. 
This leads to a contradiction when $f(1) >1$ and $r\geq 3$. Thus $f(1)=1$. 
Moreover, the space of conformal blocks associated 
to the 3-holed sphere with boundary components colored by $1, 1$ and $2$ 
is 1-dimensional so this is the same for the 
the coloring  $1,1$ and $f(2)$. Therefore  $f(2)\leq 2$ by the 
Clebsch-Gordan admissibility conditions and hence $f(2)=2$.  
Therefore $A^3=B^3$ and $A^8=B^8$ which gives us $A=B$.

The above proof works without essential modifications when 
$V$ is anti-linear. 
\end{proof}

\subsection{The first Johnson subgroups and their  quantum images}
For a group $G$ we denote by $G_{(k)}$ the lower central series 
defined by:  
\[ G_{(1)}=G, \, G_{(k+1)}=[G,G_{(k)}], k\geq 1\]
An  interesting  family of subgroups of the mapping class group is 
the set of higher Johnson subgroups defined as follows. 

\begin{definition}
The $k$-th Johnson subgroup 
$I_g(k)$ is the group of mapping classes of homeomorphisms of the closed 
orientable surface $\Sigma_g$ of genus $g$ 
whose action by outer automorphisms on $\pi/\pi_{(k+1)}$ is trivial, where 
$\pi=\pi_1(\Sigma_g)$. 
\end{definition}

Thus $I_g(0)=M_g$, $I_g(1)$ is the Torelli group commonly 
denoted $T_g$, while $I_g(2)$ is the group generated by the Dehn twists along 
separating simple closed curves and considered by Johnson and Morita 
(see e.g. \cite{John,Mor}), which is often denoted by ${K}_g$.  

\begin{proposition}\label{chain3}
For $g\geq 3$ we have the following chain of normal groups of finite index:
\[ \rho_p([K_g,K_g])\subset \rho_p(I_g(3))\subset \rho_p(K_g)\subset \rho_p(T_g)\subset \rho_p(M_g).\] 
\end{proposition}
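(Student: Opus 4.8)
The plan is to establish the chain of inclusions by proving two separate finite-index statements: first that $\rho_p(T_g)$ has finite index in $\rho_p(M_g)$, and then that $\rho_p(I_g(3))$ has finite index in $\rho_p(T_g)$; together with the trivial inclusions $[K_g,K_g]\subset I_g(3)\subset K_g\subset T_g$ (which follow from $I_g(2)=K_g$ and $[K_g,K_g]\subset K_{g,(2)}\subset I_g(3)$ via the general fact that a commutator of two elements of $I_g(k)$ lies in $I_g(2k)$, applied with $k=2$), this gives the full chain, and normality is inherited because each $I_g(k)$ is normal in $M_g$ and $\rho_p$ is a homomorphism. The key representation-theoretic input is Proposition \ref{weakZD}: the image $\rho_{p,A}(M_g)$ is topologically dense (hence Zariski dense) in the almost-simple Lie group $SU(N_+(p,g,A),N_-(p,g,A))$, and the same holds after restriction of scalars for $\rho_p(M_g)$ in $\mathbb{P}\mathbb{G}_p(\mathbb R)$ by the Hall Lemma argument of Section 2.

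The main step is the finite-index claim for the Torelli subgroup. First I would recall that $M_g/T_g\cong Sp(2g,\mathbb Z)$, so $T_g$ is normal of infinite index in $M_g$; hence the issue is purely about the images. I would argue that $\rho_p(T_g)$ is a normal subgroup of the Zariski-dense (in $\mathbb{P}\mathbb{G}_p(\mathbb R)$) group $\rho_p(M_g)$, so its Zariski closure $\overline{\rho_p(T_g)}$ is a normal subgroup of $\mathbb{P}\mathbb{G}_p(\mathbb R)$, which is a product of almost-simple factors. Therefore $\overline{\rho_p(T_g)}$ is itself a subproduct of these simple factors, possibly trivial. To exclude the possibility that $\rho_p(T_g)$ projects trivially (or to a finite/central subgroup) onto some factor, I would exhibit explicit elements of $T_g$ acting non-trivially in each quantum representation; the natural source is the construction already used in Lemma \ref{contain} and in \cite{FK11}: inside the embedded $\Sigma_{0,5}$ one finds elements of $PB_4$ — in particular products of Dehn twists that are null-homologous and hence lie in $T_g$ — whose Burau image $\beta_{q_p(A)}(PB_4)$ is dense in $SU(3)$ or $SU(2,1)$, so they act with infinite order in $\rho_{p,A}$. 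This shows $\overline{\rho_p(T_g)}$ surjects onto each simple factor, hence (being normal) equals $\mathbb{P}\mathbb{G}_p(\mathbb R)$; thus $\rho_p(T_g)$ is Zariski dense. Since $\rho_p(T_g)$ is a finitely generated normal subgroup of $\rho_p(M_g)$ which is a lattice (or at least a discrete Zariski-dense subgroup) in a higher-rank semisimple group with finite center, the Margulis normal subgroup theorem applied to $\mathbb{P}\mathbb{G}_p$ forces $\rho_p(T_g)$ to have finite index in $\rho_p(M_g)$. The same Margulis-type argument, applied again to the pair $I_g(3)\lhd T_g$, shows that once $\rho_p(I_g(3))$ is Zariski dense in $\mathbb{P}\mathbb{G}_p(\mathbb R)$ it is of finite index in $\rho_p(T_g)$.

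It remains to show $\rho_p(I_g(3))$ is Zariski dense. Here I would again invoke the explicit elements: the relevant products of Dehn twists supported in the embedded five-holed sphere can be arranged to lie not merely in $T_g=I_g(1)$ but in $I_g(3)$ (this is the point where I would use the sharper statements from \cite{FK11}, where the free subgroups are built from commutators of Dehn twists along separating curves, which already lie deep in the Johnson filtration), so the same density-of-Burau-image argument shows $\rho_p(I_g(3))$ surjects onto each simple factor; normality of $I_g(3)$ in $M_g$ then upgrades this to Zariski density in the whole product exactly as above. Assembling the pieces: $[K_g,K_g]\subset I_g(3)\subset K_g\subset T_g\subset M_g$ at the group level maps to the chain of images, all the displayed subgroups are normal (images of normal subgroups under a homomorphism, and normality in $\rho_p(M_g)$ suffices for the statement), and the three potentially-proper inclusions $\rho_p(I_g(3))\subset\rho_p(K_g)\subset\rho_p(T_g)\subset\rho_p(M_g)$ are each of finite index by the normal subgroup theorem once Zariski density of $\rho_p(I_g(3))$ is known.

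The hard part will be verifying that the explicit infinite-order elements produced from the Burau representation genuinely lie in $I_g(3)$ (and not merely in $T_g$), since the Johnson-filtration depth of a given product of Dehn twists is delicate; I expect one must either cite the precise constructions of \cite{FK11} or check directly that commutators of twists along separating curves bounding handles disjoint from the rest of the surface lie in $I_g(3)$, using that a twist along a separating curve lies in $I_g(2)=K_g$ and a commutator of two such lies in $I_g(4)\subset I_g(3)$. A secondary technical point is making the Margulis normal subgroup theorem directly applicable: $\mathbb{P}\mathbb{G}_p(\mathbb R)$ must be higher rank (true since $N(p,g)\geq 3$ and at least two non-compact factors appear, or one factor of real rank $\geq 2$) and one must know $\rho_p(M_g)$ is actually a lattice rather than just Zariski dense — alternatively one avoids this by working with the finite-index universal central extension and the arithmeticity of $P\mathbb U(\mathcal O_p)$ from \cite{GM}, inside which $\rho_p(M_g)$ sits with finite index is itself the content one is after, so care is needed to avoid circularity; the cleanest route is to use only that a Zariski-dense normal subgroup of a Zariski-dense subgroup of a product of almost-simple groups must itself be Zariski-dense and then deduce finite index from the normal subgroup theorem applied inside the ambient arithmetic lattice $P\mathbb U(\mathcal O_p)$.
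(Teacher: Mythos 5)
Your proposal takes a fundamentally different route from the paper, and unfortunately the central step does not go through. The Margulis normal subgroup theorem applies to a \emph{lattice} in a higher-rank semisimple group: a normal subgroup of such a lattice is either finite (central) or of finite index. You want to apply it with $\rho_p(M_g)$ in the role of the lattice, but nothing in the paper (nor in your argument) shows $\rho_p(M_g)$ is a lattice in $\mathbb{P}\mathbb{G}_p(\R)$ — Theorem \ref{ZD} only gives discreteness and Zariski density, and these do not imply finite covolume. The ambient arithmetic group $P\mathbb U(\mathcal O_p)$ \emph{is} a lattice (cocompact, by Borel--Harish-Chandra), but $\rho_p(T_g)$ is not normal in $P\mathbb U(\mathcal O_p)$, only in $\rho_p(M_g)$, so the NST cannot be invoked "inside the ambient lattice" as you suggest in your last paragraph. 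There is no known version of the NST for a normal subgroup of a merely discrete Zariski-dense subgroup, and indeed the statement is false in general even for lattices if one drops higher rank (e.g.\ $[F_2,F_2]\lhd F_2\subset SL(2,\Z)$ is Zariski dense and infinite index).

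The paper's actual argument is algebraic and elementary, and it hinges on an input you never use: in the quantum representation the $p$-th power of a Dehn twist has trivial image. Each successive quotient $M_g/T_g\cong Sp(2g,\Z)$, $T_g/K_g\cong \bigwedge^3 H/H$, $K_g/I_g(3)\cong A_3$ (image of the third Johnson homomorphism) is generated by (classes of) Dehn twists or BP-pairs, and killing $p$-th powers of twists collapses each such quotient to a finite group ($Sp(2g,\Z/p\Z)$ via the congruence subgroup property in the first case, $A/pA$ for a finitely generated abelian $A$ in the other two). The last inclusion $\rho_p([K_g,K_g])\subset\rho_p(K_g)$ is handled by noting $\rho_p(K_g)/\rho_p([K_g,K_g])$ is a finitely generated abelian group generated by order-$p$ elements, hence finite. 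This is considerably sharper than what your scheme could yield, since it does not require $\rho_p(M_g)$ to be a lattice, and it identifies the finite quotients rather concretely. Your observation that $[K_g,K_g]\subset I_g(4)\subset I_g(3)$ is correct, and the overall chain of inclusions at the group level is fine, but without the $p$-th power trick you have no mechanism to get finite index.
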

\begin{proof}
There is a surjective homomorphism 
$f_1:Sp(2g,\Z)\to \frac{\rho_p(M_g)}{\rho_p(T_g)}$. 
The image of the $p$-th power  of a Dehn twist in $\rho_p(M_g)$ is trivial. 
On the other hand,  the image of a Dehn twist in $Sp(2g,\Z)$ is a 
transvection and taking all Dehn twists one obtains a system of generators  
for $Sp(2g,\Z)$. Using the congruence subgroup property 
for $Sp(2g,\Z)$, where $g\geq 2$, the image of $p$-th powers 
of Dehn twists in $Sp(2g,\Z)$ generate the congruence subgroup 
$Sp(2g,\Z)[p]=\ker (Sp(2g,\Z)\to Sp(2g,\Z/p\Z))$. 
Since the mapping class group is generated by Dehn twists 
the homomorphism $f_1$ should factor through 
$\frac{Sp(2g,\Z)}{Sp(2g,\Z)[p]}=Sp(2g,\Z/p\Z)$. In particular,  
the image of $f_1$ is finite.

By the work of Johnson (see \cite{John}) one knows that 
when $g\geq 3$ the quotient $\frac{T_g}{K_g}$ is a finitely generated 
abelian group $A$ 
isomorphic to $\bigwedge^3H/H$,  where 
$H$ is the homology of the surface. Thus there is a surjective 
homomorphism 
$f_2: A\to \frac{\rho_p(T_g)}{\rho_p(K_g)}$.  
The Torelli group $T_g$ is generated by  
BP-pairs, namely elements of the form 
$T_{\gamma}T_{\delta}^{-1}$, where $\gamma$ and $\delta$ are 
non-separating disjoint simple closed curves bounding a subsurface 
of genus 1 (see \cite{John79}). Now $p$-th powers of the BP-pairs 
$(T_{\gamma}T_{\delta}^{-1})^p= T_{\gamma}^pT_{\delta}^{-p}$ 
have trivial images  in $\frac{\rho_p(T_g)}{\rho_p(K_g)}$. 
But the classes 
$T_{\gamma}T_{\delta}^{-1}$ also generate 
the quotient $A$ and hence the classes 
$(T_{\gamma}T_{\delta}^{-1})^p$ will generate the  abelian subgroup 
$pA$ of  those elements of $A$ which are divisible by $p$. 
This shows that $f_2$ factors through 
$A/pA$, which is a finite group because $A$ is finitely generated. 
In particular, $\frac{\rho_p(T_g)}{\rho_p(K_g)}$ 
is finite.

Eventually, $\frac{K_g}{I_g(3)}$ is also a finitely generated abelian group 
$A_3$, namely the image of the third  Johnson homomorphism. 
Since $K_g$ is generated by the Dehn twists along separating 
simple closed curves the previous argument shows that 
$\frac{\rho_p(K_g)}{\rho_p(I_g(3))}$ is  
the image of a surjective homomorphism from $A_3/pA_3$ and hence is 
finite.

Recently, Dimca and Papadima proved in \cite{DP} 
that $H_1(K_g)$ is finitely generated for $g\geq 3$. 
The above proof  implies 
that $\rho_p([K_g,K_g])$ is of finite index in $\rho_p(K_g)$.
We have also the following alternative 
argument, which makes the proof independent of the result in \cite{DP}. 
The group $\rho_p(K_g)$ is finitely generated since
it is of finite index in the finitely generated group $\rho_p(M_g)$.
Thus $\rho_p(K_g)/\rho_p([K_g,K_g])$ is abelian and finitely generated. 
Moreover $\rho_p(K_g)/\rho_p([K_g,K_g])$ is generated by torsion elements of 
order $p$ , since $K_g$ is generated by Dehn twists along bounding simple 
closed curves. Now, any finitely generated abelian group generated 
by order $p$ elements must be finite. 
\end{proof}

\begin{remark}
A natural question is whether $\rho_p(I_g(k+1))$ is of finite index 
in $\rho_p(I_g(k))$, for every $k$. The arguments above break down 
at $k=3$ since there are no products of powers 
of commuting Dehn twists in any higher Johnson subgroups. 
More specifically, we have to know the image of  the group 
$M_g[p]\cap I_g(k)$ in  $\frac{I_g(k)}{I_g(k+1)}$
by the Johnson homomorphism. Here $M_g[p]$ denotes the 
normal subgroup generated 
by the $p$-th powers of Dehn twists. If the image were a lattice in 
$\frac{I_g(k)}{I_g(k+1)}$, then we could deduce as above that 
$\frac{\rho_p(I_g(k))}{\rho_p(I_g(k+1))}$ is finite. 
\end{remark}

{
\small      
      
\bibliographystyle{plain}

}

\end{document}